\setlist[enumerate,1]{label=(\arabic*), ref=(\arabic*)}
\setlist[enumerate,3]{label=(\roman*), ref=(\roman*)}
\theoremstyle{plain}
\newtheorem{theorem}{Theorem}[section]
\newtheorem{lemma}[theorem]{Lemma}
\newtheorem{observation}[theorem]{Observation}
\newtheorem{claim}{Claim}[theorem]
\newtheorem*{claim*}{Claim}
\newtheorem{rmk}{Remark}
\newenvironment{claimproof}[1][Proof]{\par
	\pushQED{\qed}%
	
	\normalfont \topsep6\p@\@plus6\p@\relax
	\trivlist
	\item[\hskip\labelsep
	\textit{#1}\@addpunct{.}~]\ignorespaces
}{%
	\popQED\endtrivlist\@endpefalse
}
\newlist{Cases}{enumerate}{3}
\setlist[Cases]{parsep=0pt plus 1pt}
\setlist[Cases,1]{wide=0pt, listparindent=\parindent,
    label = \textbf{Case~\arabic*:}}
\setlist[Cases,2]{wide=\parindent, listparindent=\parindent,
    label = \textbf{Case~\arabic{Casesi}-\arabic{Casesii}:}}
\newcounter{case}
\crefname{case}{case}{cases}
\theoremstyle{definition}
\newtheorem{definition}[theorem]{Definition}
\newcounter{statement}
\let\c@statement\c@equation
\newcommand{\calC}{\mathcal{C}}
\newcommand{\calG}{\mathcal{G}}
\newcommand{\calH}{\mathcal{H}}
\newcommand{\calS}{\mathcal{S}}
\newcommand{\calW}{\mathcal{W}}
\newcommand{\capa}{{\rm cap}}
\newcommand{\ld}{\ell_{\delta}}
\newcommand{\td}{\tau_{\delta}}
\newcommand{\nw}{N_{\calW}}
\newcommand{\ve}{\varepsilon}
\newcommand{\defeq}{\coloneqq}
\let\originalleft\left
\let\originalright\right
\renewcommand{\left}{\mathopen{}\mathclose\bgroup\originalleft}
\renewcommand{\right}{\aftergroup\egroup\originalright}
\title{Covering multigraphs with bipartite graphs}
\author{
Jaehoon Kim\thanks{Department of Mathematical Sciences, KAIST, South Korea. Email: {\tt jaehoon.kim@kaist.ac.kr, hyunwoo.lee@kaist.ac.kr}. Supported by the POSCO Science Fellowship of POSCO TJ Park Foundation and the National Research Foundation of Korea (NRF) grant funded by the Korea government(MSIT) No. RS-2023-00210430.}
\and
Hyunwoo Lee\footnotemark[1] \thanks{Extremal Combinatorics and Probability Group
(ECOPRO), Institute for Basic Science (IBS). Supported by the Institute for Basic Science (IBS-R029-C4).}
}
\date{\today}
\begin{document}

\maketitle
\begin{abstract}
Hansel's lemma states that $\sum_{H\in \mathcal{H}}|H| \geq  n \log_2 n$ holds where $\mathcal{H}$ is a collection of bipartite graphs covering all the edges of $K_n$. We generalize this lemma to the corresponding multigraph covering problem and the graphon covering problem. We also prove an upper bound on $\sum_{H\in \mathcal{H}}|H|$ which shows that our generalization is asymptotically tight in some sense.
\end{abstract}

\section{Introduction}\label{sec:intro}

Twenty questions is a popular game identifying an object with twenty yes/no questions. More generally, one can easily see that $\lceil \log n \rceil$ yes/no questions are required and sufficient to identify an object among the set $V$ of $n$ given objects (every $\log$ in this paper is the logarithm to the base two).
This simple-looking game captures the basic idea on how a computer stores data and efficiently searches them.

When storing data, we can record the partition $(S_i,T_i)$ of $V$ where $S_i$ is the set of objects whose answer to the $i$-th question is yes and $T_i$ is the set of objects whose answer to the $i$-th question is no.
Consider how much memory space we have to use in the data storage. It depends on specific ways of storing the data, but we assume that we use the memory space of size $|S_i\cup T_i|$ to remember one pair $(S_i,T_i)$. Then we use memory space of size $n\lceil \log n\rceil$ in total for a given set of $\lceil \log n\rceil$ pairs. 
At this point, we naturally come to a question that whether we can save memory space by making some modifications.

We consider that $i$-th question `distinguishes' the objects in $S_i$ from $T_i$, and we wish to `distinguish' all objects from other objects by the given set of pairs. 
In order for our purpose of saving memory space, we do not force the set $S_i\cup T_i$ to be the entire $V$. In other words, instead of considering partitions of $V$, we consider the pairs $(S_1,T_1),\dots, (S_m, T_m)$ where $S_i,T_i$ are disjoint subsets of $V$ and wishes that all pairs $x,y$ of distinct elements in $V$ are `distinguished' by at least one pair $(S_i,T_i)$. 
R\'{e}nyi~\cite{renyi1961random} in 1961 captured this problem by introducing the following concept of \emph{weakly separating system}: a collection of pairs of disjoint subsets $\mathcal{P} = \{ (S_1,T_1),\dots, (S_m,T_m)\}$ is an \emph{weakly separating system} of $V$ if, for every distinct $x,y\in V$, there exists $i\in [m]$ such that $x\in S_i, y\in T_i$ or $x\in T_i, y\in S_i$.
A \emph{strongly separating system} is also defined as follows: for every distinct $x,y\in V$, we can find $i,j\in [m]$ so that $x\in S_i, y\in T_i$ and $x\in T_j, y\in S_j$.
The weakly separating systems are closely related to some variations of the hash functions and graph theory problems, so they are extensively studied, for examples, see~\cite{bollobas2007separating, bollobas2007separating2, fredman1984size, hansel1964nombre, katona1966separating, katona1967problem, kundgen2001minimal, nilli1994perfect, radhakrishnan2003entropy, tuza1994applications, wegener1979separating}.

 A weakly separating system $\mathcal{P} = \{ (S_1,T_1),\dots, (S_m,T_m)\}$ of $V$ can be considered as a collection of complete bipartite graphs with vertex partitions $(S_i,T_i)$, which together covers all edges of the complete graph $K_n$ on the vertex set $V$.
 Let $\mathcal{H}=\{H_1,\dots, H_m\}$ be a \emph{bipartite covering} of $G$ if each graph $H_i$ is a bipartite graph with $V(H_i)\subseteq V(G)$ and every edge $e$ of $G$ belongs to $E(H_i)$ for some $i\in [m]$. Let $\capa(\mathcal{H}) = \sum_{i\in [m]}|H_i|$ be the \emph{capacity} of $\mathcal{H}$. 
 Hence, our original question about the memory space becomes the question of minimizing  the capacity $\capa(\mathcal{H})$ for a bipartite covering $\{H_1,\dots, H_m\}$ of $K_n$. Note that allowing the bipartite graphs $H_i$ to be a non-complete bipartite graph does not change the problem.

Hansel~\cite{hansel1964nombre} and independently Katona and Szemer\'{e}di~\cite{katona1967problem} proved the following lemma answering the question, that even after allowing $S_i\cup T_i$ to be not the entire set $V$, we cannot further save memory spaces.
\begin{lemma}[Hansel's lemma] \label{lem:Hansel_lemma}
Let $\mathcal{H}$ be a bipartite covering of $K_n$. Then we have $\capa(\mathcal{H}) \geq n\log n$
\end{lemma}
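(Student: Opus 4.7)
The bound $n\log n$ has the flavor of a Kraft-style information-theoretic inequality, so my plan is to establish
\[
    \sum_{v \in V} 2^{-d(v)} \leq 1,
\]
where $V$ is the vertex set of $K_n$ and $d(v) := |\{i \in [m] : v \in V(H_i)\}|$ counts the bipartite graphs in $\mathcal{H}$ containing $v$. Since $\capa(\mathcal{H}) = \sum_i |V(H_i)| = \sum_v d(v)$, applying Jensen's inequality to the convex function $x \mapsto 2^{-x}$ yields
\[
    n \cdot 2^{-\capa(\mathcal{H})/n} \leq \sum_{v \in V} 2^{-d(v)} \leq 1,
\]
which rearranges to $\capa(\mathcal{H}) \geq n \log n$, reducing the lemma to the displayed Kraft-style bound.

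To prove this bound I would use a random-sides coupling. For each bipartite graph $H_i \in \mathcal{H}$ with fixed bipartition $(S_i, T_i)$, independently pick a uniformly random side $\sigma_i \in \{S_i, T_i\}$. Define the event $A_v := \{v \in \sigma_i \text{ for every } i \text{ with } v \in V(H_i)\}$; by independence of the $\sigma_i$'s one immediately obtains $\Pr[A_v] = 2^{-d(v)}$.

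The crux is the observation that at most one vertex can simultaneously realize its $A$-event. If $u \neq w$ both did, the covering hypothesis would supply some $H_i$ with $u$ and $w$ in opposite parts, and satisfying both $A_u$ and $A_w$ would force $\sigma_i$ to equal two different sides simultaneously, a contradiction. Hence $\sum_{v \in V} \mathbf{1}[A_v] \leq 1$ pointwise, and taking expectations gives the desired inequality $\sum_v 2^{-d(v)} \leq 1$. The main obstacle is simply spotting this random-sides reformulation; once that insight is in place, the verification is a two-line contradiction argument together with a single application of Jensen.
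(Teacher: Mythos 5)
Your proof is correct, and it is essentially the same argument that powers the paper's approach, just in purely discrete form. The paper does not prove Hansel's lemma directly; it derives it as a special case of the graphon theorem (\Cref{thm:main_noweight}), whose engine is \Cref{lem:graphon_lemma}. There, the authors likewise pick a random side $C_i\in\{A_i,B_i\}$ for each bipartite graphon independently and uniformly, form a quantity measuring how many chosen sides contain a point $x$, and close the argument with Jensen's inequality applied to $\log$. When $\delta=1$, their weighted bookkeeping ($\ell_\delta=0$, $\tau_\delta=\tfrac12$) collapses to exactly your observation that a point lying in none of the chosen sides cannot be separated from any other such point, which is your ``at most one $A_v$ occurs'' step translated to measure. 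The trade-off is generality: your direct random-sides argument is shorter and more transparent for $K_n$, whereas the paper's graphon formulation is built to carry the additional $\delta$-weighting needed for the multigraph version (\Cref{thm:main_weight}) and the lower-order $\log\log$ terms, which your argument as written does not reach.
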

Note that this lemma is tight up to an additive constant. The precise minimum was computed by Bollob\'{a}s and Scott ~\cite[Theorem~3]{bollobas2007separating} by demonstrated that if $n = 2^k + \ell < 2^{k+1}$, then the minimum is $nk + 2\ell$.
 Katona and Szemer\'{e}di~\cite{katona1967problem} further generalized this lemma by considering a bipartite covering of an arbitrary graph.
\begin{lemma}\label{lem:katona-szemeredi}
    Let $G$ be a $n$-vertex simple graph. Let $\mathcal{H}=\{H_1, \dots, H_m\}$ be a bipartite covering of $G$. Then $\capa(\mathcal{H}) \geq \sum_{v\in V(G)} \log \left(\frac{n}{n-d(v)}\right) \geq n\log \left(\frac{n^2}{n^2 - 2e(G)} \right)$.
\end{lemma}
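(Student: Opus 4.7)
My plan is to use a random-coloring (random-labeling) argument. For each $H_i \in \mathcal{H}$ with bipartition $(A_i, B_i)$, independently flip a fair coin $\xi_i \in \{A, B\}$; call a vertex $v \in V(G)$ \emph{consistent} if for every $H_i$ containing $v$ the vertex $v$ lies on the $\xi_i$ side. Setting $c(v) = |\{i : v \in V(H_i)\}|$, we have $\Pr[v \text{ consistent}] = 2^{-c(v)}$ and $\capa(\mathcal{H}) = \sum_v c(v)$.

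The first key observation is that if $uv \in E(G)$ is covered by some $H_i$, then $u$ and $v$ lie on opposite sides of $H_i$, so at most one of them can be consistent; the random set $T$ of consistent vertices is therefore an independent set in $G$. Moreover, conditioning on a fixed $v$ being consistent forces all other consistent vertices to be non-neighbors of $v$, giving $\mathbb{E}[|T| \mid v \in T] \leq n - d(v)$. Cauchy--Schwarz then yields
\[
\Bigl(\sum_v 2^{-c(v)}\Bigr)^2 = (\mathbb{E}|T|)^2 \leq \mathbb{E}|T|^2 = \sum_v \Pr[v \in T]\,\mathbb{E}[|T| \mid v \in T] \leq \sum_v 2^{-c(v)}(n-d(v)).
\]

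The final step is to convert this square-root-type estimate into the logarithmic form claimed in the lemma. One combines the AM--GM bound $\sum_v 2^{-c(v)} \geq n \cdot 2^{-\capa(\mathcal{H})/n}$ on the left with a weighted AM--GM applied to $\sum_v 2^{-c(v)}(n-d(v))$ on the right, together with the concavity of $\log$, to extract the per-vertex bound $\capa(\mathcal{H}) \geq \sum_v \log(n/(n-d(v)))$. The second inequality of the lemma is then immediate from the first by Jensen's inequality applied to the convex function $x \mapsto \log(n/(n-x))$ evaluated at the average degree $\bar d = 2e(G)/n$.

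The main obstacle will be the last conversion step. A single naive application of AM--GM produces only half of the desired exponent (namely $\capa(\mathcal{H}) \geq (n/2)\log(n^2/(n^2 - 2e(G)))$ in place of $n\log(n^2/(n^2 - 2e(G)))$), so a more delicate Jensen-type manipulation that exploits the interaction between $\sum_v 2^{-c(v)}$ and $\sum_v 2^{-c(v)}(n-d(v))$ is required. An alternative worth pursuing is a direct information-theoretic argument: encode a uniformly chosen vertex $V^*$ by its trinary side-codes $Z_i \in \{A, B, *\}$, observe via the covering property that the equivalence class $\{u : Z(u) = Z(v)\}$ lies inside $V \setminus N(v)$ (hence has size at most $n - d(v)$), and use $H(V^*) = H(Z) + H(V^* \mid Z)$ to lower-bound $H(Z)$ by $\frac{1}{n}\sum_v \log(n/(n-d(v)))$; matching this with a subtle upper bound on $H(Z)$ of the form $\capa(\mathcal{H})/n$ is itself the crux of the problem, since a naive subadditivity $H(Z) \leq \sum_i H(Z_i)$ is not tight enough in general.
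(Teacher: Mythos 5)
The paper does not actually give a proof of this lemma; it is cited as Katona and Szemer\'edi's result~\cite{katona1967problem}, and the graphon machinery developed later in the paper (Lemma~\ref{thm:edge-density-hansel} with $\lambda=1$) recovers only the averaged bound with an additional $-2n$ additive loss, not the per-vertex inequality. So there is no paper proof to match; your attempt has to stand on its own.

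Your Cauchy--Schwarz step is correct as far as it goes: writing $a_v = 2^{-c(v)}$ and $b_v = n-d(v)$, the identity $\mathbb E|T|^2 = \sum_v \Pr[v\in T]\,\mathbb E[|T|\mid v\in T]$ together with $|T|\le n-d(v)$ on the event $v\in T$ does give $\big(\sum_v a_v\big)^2 \le \sum_v a_v b_v$. But, as you yourself flag, this inequality does \emph{not} imply the per-vertex bound $\sum_v c(v) \ge \sum_v \log(n/b_v)$, and the gap is real rather than a routine ``conversion step.'' Concretely, for the path $v_1v_2v_3v_4$ one has $b=(3,2,2,3)$, and the constant vector $a_v\equiv 5/8$ satisfies $\big(\sum a_v\big)^2 = \sum a_v b_v = 25/4$ while $\prod_v a_v = (5/8)^4 \approx 0.153 > \prod_v(b_v/4) = 9/64 \approx 0.141$; so no amount of AM--GM or Jensen, applied only to the two quantities appearing in your displayed inequality, can recover $\prod a_v \le \prod(b_v/n)$. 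The entropy variant you outline has exactly the analogous gap: $H(Z_i)\le |H_i|/n$ is false (take $|A_i|=|B_i|=n/4$, giving $H(Z_i)=3/2$ versus $|H_i|/n=1/2$), and naive subadditivity of entropy cannot close it -- again as you note.

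In short, the proposal identifies the right random-coloring setup and produces a true and nontrivial second-moment inequality, but the final reduction to the claimed per-vertex lower bound is missing, and it cannot be supplied by the tools you list. A correct proof needs a finer argument that tracks which vertex is the ``surviving'' one (for instance conditioning on the event $T=\{v\}$, or an entropy argument that reveals the coordinates $Z_i$ adaptively and charges each reveal only when the current candidate set intersects $V(H_i)$), rather than relying on the aggregate inequality $(\sum a_v)^2\le\sum a_v b_v$.
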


Arguably the most natural next question is to consider problems for non-ideal situations.
What if some of the twenty questions were answered incorrectly? In a more realistic scenario, what if noises are introduced, so some of the answers we obtain from the data storage are altered, and thus we obtained wrong answers?
In order to be able to tolerate some level of noise, one might want to ensure to distinguish the pairs $x,y \in V$ more than once. If our set of questions distinguishes $x$ from all other elements at least $\lambda$ times, then we are able to tolerate at most $\lambda-1$ lies.
This motivates the following generalization of the bipartite covering. 
Let $\mathcal{H}=\{H_1,\dots, H_m\}$ be a \emph{bipartite covering} of a multigraph $G$ if each graph $H_i$ is a bipartite graph with $V(H_i)\subseteq V(G)$ and for an edge $e\in E(G)$ with multiplicity $\lambda$, there are at least $\lambda$ disinct indices $i_1,\dots, i_{\lambda}\in [m]$ such that $e\in E(H_{i_j})$ for all $j\in [\lambda]$.

With this definition, our question is to find the minimum of $\capa(\mathcal{H})$ for a bipartite covering $\mathcal{H}$ of the complete multi-graph $K_n^{\lambda}$ with every edge having multiplicity $\lambda$. Note that a similar question was considered in~\cite[Chapter 14.2]{alon2016probabilistic} in terms of finding the minimum of $m= |\mathcal{H}|$ when all the bipartite graphs have to be spanning (i.e. $S_i\cup T_i=V$ for all $i$.).

Of course, we can take a bipartite covering of $K_n$ and duplicate them $\lambda$ times to obtain a bipartite covering $\mathcal{H}$ of $K_n^{\lambda}$ with $\capa(\mathcal{H})= \lambda n\log n + O(\lambda)$, but as we will see in \Cref{thm:graph_upperbound}, it is not efficient at all. Then how much can we minimize $\capa(\mathcal{H})$? For the lower bound of $\capa(\calH)$, we prove the following theorem generalizing Hansel's Lemma.


\begin{theorem}\label{cor:graph_lowerbound}
        Suppose $n, \lambda \in \mathbb{N}$. 
        Let $\mathcal{H}$ be a bipartite covering of the multigraph $K_n^{\lambda}$, then we have 
        $$\capa(\mathcal{H}) \geq \max\left\{2 \lambda (n-1), \enspace n\left( \log n + \lfloor \frac{\lambda - 1}{2} \rfloor \log \left(\frac{\log n}{\lambda} \right) - \lambda -1 \right)\right\}.$$
\end{theorem}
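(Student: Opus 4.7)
The statement is a maximum of two lower bounds, which I would establish separately.

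\textit{Linear bound $2\lambda(n-1)$.} This is a direct edge-counting argument. Every pair of distinct vertices of $K_n^\lambda$ must be separated by at least $\lambda$ of the bipartite graphs $H_i=(S_i,T_i)$, so
\[
\sum_{i} |S_i||T_i| \ge \lambda \binom{n}{2}.
\]
Combining $|S_i||T_i|\le |H_i|^2/4$ with the trivial bound $|H_i|\le n$ yields $n\cdot \capa(\mathcal{H}) \ge \sum_i |H_i|^2 \ge 4\sum_i|S_i||T_i|\ge 2\lambda n(n-1)$, i.e., $\capa(\mathcal{H})\ge 2\lambda(n-1)$.

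\textit{Hansel-type bound.} This is the substantive generalization of Hansel's lemma. For a uniformly random $v\in V(K_n^\lambda)$, encode $v$ by $\sigma(v)=(X_1(v),\dots,X_m(v))\in\{S,T,*\}^m$, where $X_i(v)$ records whether $v\in S_i$, $v\in T_i$, or $v\notin V(H_i)$. Two key properties drive the argument:
\begin{enumerate}
\item $\sigma$ is injective (separating), so $H(\sigma(v))\ge \log n$;
\item the code $\{\sigma(v)\}_{v\in V}$ has minimum distance $\lambda$ in the metric $d(\sigma,\sigma')=|\{i:\{\sigma_i,\sigma'_i\}=\{S,T\}\}|$.
\end{enumerate}
Property~(1) together with the standard Hansel-style entropy argument already produces the leading $n\log n$ term. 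Property~(2) is the source of the $\lfloor(\lambda-1)/2\rfloor\log(\log n/\lambda)$ improvement: the $\lambda$-fold distance structure lets us ``erase'' any $\lfloor(\lambda-1)/2\rfloor$ of the coordinates of $\sigma(v)$ while still preserving separation of $v$ from all other vertices. I would translate this erasure freedom into an additional $\log\binom{|I(v)|}{\lfloor(\lambda-1)/2\rfloor}$ bits of entropy per vertex, where $I(v)=\{i:v\in V(H_i)\}$. Coupled with a Hansel-type lower bound showing that $|I(v)|\gtrsim \log n$ on average, Jensen's inequality on the concave function $x\mapsto \log\binom{x}{k}$ then produces the desired extra term $n\cdot\lfloor(\lambda-1)/2\rfloor\log(\log n/\lambda)$.

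\textit{Main obstacle.} The delicate step is fusing the Hansel entropy bound with the erasure improvement so that the correct $\log(\log n/\lambda)$ factor (rather than a weaker $\log\log n$ or $\log n$) emerges, and to track the additive $-n(\lambda+1)$ slack coming from the Stirling approximation $\log\binom{c}{k}\approx k\log(c/k)+O(k)$ and from the cost of combining the two bounds. A clean way to execute this is probably by induction on $\lambda$: remove a carefully chosen $H_{i^*}$ from $\mathcal{H}$, apply the bound inductively to the remaining $(\lambda-1)$-cover $\mathcal{H}\setminus\{H_{i^*}\}$, and optimize the choice of $i^*$ so that the cost $|H_{i^*}|$ is balanced against the gain in Hansel's term. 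Verifying that this induction indeed recovers the stated form, uniformly in $\lambda$ from $1$ up to (roughly) $\log n$, is the main technical challenge.
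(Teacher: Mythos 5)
Your argument for the linear bound $\capa(\mathcal{H})\ge 2\lambda(n-1)$ is correct and is, up to reformulation, the same estimate the paper uses (there phrased as $\int W_i\le\tfrac{\delta}{2}\,m(A_i\cup B_i)^2$, a Mantel-type bound, giving $\tfrac{2(1-\ve)}{\delta}$). The Hansel-type bound, however, has a genuine gap at precisely the step you flag as the ``main obstacle,'' and the device you propose there does not repair it. After the erasure bookkeeping, the quantity you need to lower-bound is $\sum_v\log\binom{|I(v)|}{k}$ with $k=\lfloor(\lambda-1)/2\rfloor$. But $x\mapsto\log\binom{x}{k}$ is \emph{concave}, so Jensen combined with the average bound $\tfrac1n\sum_v|I(v)|\gtrsim\log n$ produces an \emph{upper} bound on $\tfrac1n\sum_v\log\binom{|I(v)|}{k}$, not a lower bound. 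The average alone carries no information: a covering could have a handful of vertices with huge $|I(v)|$ while most vertices satisfy $|I(v)|=O(\lambda)$, and then $\sum_v\log\binom{|I(v)|}{k}$ collapses even though the average of $|I(v)|$ is $\Omega(\log n)$. What is actually needed is a near-pointwise statement: all but an $o(1)$ fraction of vertices have $|I(v)|\gtrsim\log n$. The paper proves exactly this as a separate claim inside the proof of the weighted graphon theorem: it shows $m\bigl(\{x:N_{\mathcal{W}}(x)<\tfrac12\log\tfrac1\ve\}\bigr)<\ve^{1/5}$ by restricting the separating system to that low-degree set, rescaling to $[0,1]^2$, and invoking the unweighted graphon Hansel bound on the restricted system to force a contradiction. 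That restriction-and-rescale argument is the missing idea in your plan; it is not automatic from the Hansel average.

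Your fallback induction on $\lambda$ also fails quantitatively. Removing one $H_{i^*}$ does leave a $(\lambda-1)$-fold cover, and the inductive step would then require $|H_{i^*}|\ge F(\lambda)-F(\lambda-1)$ where $F$ denotes your target bound. But for odd $\lambda=o(\log n)$ the floor $\lfloor(\lambda-1)/2\rfloor$ increases by one, so $F(\lambda)-F(\lambda-1)\approx n\log(\log n/\lambda)$ exceeds $n$, whereas no single bipartite graph on $n$ vertices contributes more than $n$ to $\capa(\mathcal{H})$. The paper sidesteps induction entirely: a single random-signing argument handles all multiplicities at once by counting, for each point $x$, how many of the chosen sides it lies in and allowing up to $\ell_{\delta}=\lfloor\tfrac{1}{2\delta}\rfloor$ ``eliminations'' (this is where the floor term originates); the near-pointwise degree bound is then plugged into the resulting binomial sum.
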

Note that the first term $2 \lambda(n-1)$ is larger if $\lambda \geq \frac{1}{2}\log n$ and the second term is much larger if $\lambda = o(\log n)$. In fact, if $\lambda = O(\log^{1-\ve} n)$, then we have $\capa(\mathcal{H}) = n\log n + \Omega(n \lambda \log\log n)$. 
In fact, the next theorem supplies an upper bound on $\capa(\mathcal{H})$ showing that $\capa(\mathcal{H}) = n\log n + \Theta(n \lambda \log\log n)$.

\begin{theorem} \label{thm:graph_upperbound}
 There exist a bipartite covering $\mathcal{H}= \{H_1,\dots, H_m\}$ of the multigraph $K_{n}^{\lambda}$ satisfying
    $$\capa(\mathcal{H}) \leq n\big( \log (n-1) + (1+ o(1))\lambda \log \log n\big).$$
    If $\lambda = k\log (n-1)$ with $k\geq 1/2$, then we can further ensure $|\mathcal{H}|\leq (2+ \frac{30}{\sqrt{k}}) \lambda n$.
\end{theorem}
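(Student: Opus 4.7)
\begin{proofsketch}
The plan is to construct the bipartite covering from a single binary error-correcting code produced by the Gilbert--Varshamov bound. I identify the $n$ vertices of $K_n^{\lambda}$ with $n$ distinct codewords of a binary code $C \subseteq \{0,1\}^m$ of minimum Hamming distance at least $\lambda$, and for each coordinate $i \in [m]$ take $H_i$ to be the complete bipartite graph on the parts $\{v : c(v)_i = 0\}$ and $\{v : c(v)_i = 1\}$. The edge $\{u,v\}$ is then covered by $H_i$ exactly when $c(u)_i \neq c(v)_i$, hence by $\mathrm{dist}(c(u), c(v)) \geq \lambda$ graphs in total, so $\mathcal{H} = \{H_1, \dots, H_m\}$ is a valid bipartite covering of $K_n^{\lambda}$. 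Since $|V(H_i)| \leq n$, this immediately yields $\capa(\mathcal{H}) \leq mn$ and $|\mathcal{H}| = m$.

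The task thus reduces to choosing a sufficiently small $m$. The Gilbert--Varshamov argument (greedy selection of codewords, each time removing the ball of radius $\lambda-1$) produces such a code as soon as $2^m \geq (n-1)\cdot \mathrm{Vol}(m, \lambda-1)$, where $\mathrm{Vol}(m, r) = \sum_{j \leq r}\binom{m}{j} \leq (em/\lambda)^{\lambda}$; hence it suffices that $m \geq \log(n-1) + \lambda\log(em/\lambda)$. I set $m = \lceil \log(n-1) + (1+o(1))\lambda\log\log n\rceil$ and verify the inequality in each of the regimes $\lambda \ll \log n/\log\log n$, $\lambda \asymp \log n/\log\log n$, and $\lambda \gg \log n/\log\log n$: in each case a direct estimate yields $\log(em/\lambda) \leq (1+o(1))\log\log n$, so the Gilbert--Varshamov condition is met. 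This gives the capacity bound $\capa(\mathcal{H}) \leq n\bigl(\log(n-1) + (1+o(1))\lambda\log\log n\bigr)$. For the additional claim, when $\lambda = k\log(n-1)$ with $k \geq 1/2$, the same construction yields $m = O(\lambda\log\log n)$, comfortably bounded by $(2+30/\sqrt{k})\lambda n$ for $n$ large; alternatively, to realise the explicit constant $30/\sqrt{k}$, one can replace the code construction by $m = \lceil(2 + 30/\sqrt{k})\lambda\rceil$ independent uniformly random balanced bipartitions of $V$, in which case each edge is covered by $\mathrm{Bin}(m,1/2)$ graphs and a Chernoff bound followed by a union bound over the $\binom{n}{2}$ edges produces a valid covering with positive probability.

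The main technical obstacle is the uniform verification of the Gilbert--Varshamov condition with a genuine $(1+o(1))$ factor across all ranges of $\lambda$; the constraint is tightest around $\lambda \asymp \log n/\log\log n$, where one may need the sharper envelope $\mathrm{Vol}(m, \lambda-1) \leq 2\binom{m}{\lambda-1}$ rather than the crude $(em/\lambda)^{\lambda}$ to avoid losing the sharp leading constant. The Chernoff calibration in the alternative random construction, which pins down the particular constant $30/\sqrt{k}$, is routine by comparison.
\end{proofsketch}
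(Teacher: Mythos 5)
Your argument is correct, and it takes a genuinely different route from the paper's. The paper proves an Erd\H{o}s-type max-cut lemma for an exponentially weighted degree $\Delta^{\rm ex}(\alpha,G)=\max_v\sum_{u\sim v}\alpha^{\mu(uv)}$: one can always split off a simple spanning bipartite subgraph that multiplies $\Delta^{\rm ex}$ by $\tfrac12(1+\alpha^{-1})$, and iterating this until the exponential degree drops below $\alpha$ yields a bipartite covering of size $\approx(\lambda\log\alpha+\log n)/(1-\log(1+\alpha^{-1}))$; choosing $\alpha=\log n$ gives the first bound and $\alpha=1/(1-k^{-1/2})$ the second. You instead identify the vertices with codewords of a binary code of minimum distance $\geq\lambda$ and let the $m$ coordinates define complete bipartite (spanning) graphs; the Gilbert--Varshamov greedy argument then supplies such a code of length $m=\log(n-1)+(1+o(1))\lambda\log\log n$ (in fact with $O(1/\log\log n)$ excess, since $\log\mathrm{Vol}(m,\lambda-1)\le(\lambda-1)(\log\log n+O(1))$ uniformly here — there is more slack than your ``main technical obstacle'' remark suggests, and the regime $\lambda\asymp\log n/\log\log n$ is not the bottleneck), and for the $\lambda=\Theta(\log n)$ regime you switch to $m$ independent random bipartitions with a Chernoff plus union bound. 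Your first half makes explicit the coding-theory reformulation that the paper only gestures at in its applications section, and your second half is a clean probabilistic counterpart of the paper's deterministic iteration. One caveat you should be aware of: the trailing factor of $n$ in the stated bound $|\mathcal{H}|\le(2+30/\sqrt{k})\lambda n$ is evidently a typo for $|\mathcal{H}|\le(2+30/\sqrt{k})\lambda$ (the paper's own proof produces the latter and then multiplies by $n$ to bound the capacity). Reading the statement with the spurious $n$ makes your ``$m=O(\lambda\log\log n)$ is comfortably bounded'' observation vacuous; the correct reading is handled by your Chernoff construction, whose calibration at $c=30/\sqrt k$ does indeed go through with room to spare (one gets $\frac{(c/2)^2 k}{2(1+c/2)}\approx 5$ versus the required $\approx 1.4$ nats per $\log n$).
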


This theorem provides that the two bounds on \Cref{cor:graph_lowerbound} determine the asymptotic growth rate of the minimum capacity of a bipartite covering of $K_n^{\lambda}$ when $\lambda$ is either $O(\log^{1-\ve} n)$ or $\omega(\log n)$.

We deduce \Cref{cor:graph_lowerbound} from a more general theorem regarding graphons. In order to describe our results, we briefly introduce what graphons are.  A function $W: [0, 1]^2 \to [0, 1]$ is a \emph{graphon} if it is a symmetric and measurable function. Graphons were introduced by Lov\'{a}sz and Szegedy~\cite{lovasz2006limits} and developed by Borgs, Chayes, Lov\'{a}sz, S\'{o}s and Vesztergombi~\cite{borgs2008convergent}. A space of graphons induced with a specific metric, so-called cut-distance, has various useful properties, hence it is very useful for studying dense graphs and sequences of dense graphs.  

For a graph $H$ on vertex set $[n]$, we consider the graphon $W_{H}$ such that $W_{H}(x,y) = 1$ if $x\in [\frac{i-1}{n},\frac{i}{n})$ and $y\in [\frac{j-1}{n},\frac{j}{n})$ for some $ij\in E(H)$ and $W_{H}(x,y)=0$ otherwise. This graphon captures various properties of $H$, showing that the graphons are natural generalizations of graphs.

As graphons are real-valued functions, we can define a uniform norm on graphons.
We write $m(A)$ to indicate the Lebesgue measure of $A$.
\begin{definition}
    For a graphon $W$, we define $L_{\infty}$-norm $\lVert \cdot \rVert _{\infty}$ as follows: $$\lVert W \rVert _{\infty} \defeq \inf \{\alpha \in [0, 1] : m(\{(x, y)\in [0, 1]^2 : W(x, y) > \alpha\}) = 0\}.$$
    \end{definition}

Note that for a graphon $W$, the inequality $\lVert W \rVert_{\infty} \leq \delta$ means that $W(x,y)$ is at most $\delta$ except for a set of points $(x,y)$ which has measure zero. We also define the following concept of bipartite graphons. The definition of bipartite graphon is the following. 

\begin{definition}
    A graphon $W$ is \emph{bipartite} if there exist two measurable sets $A, B\subseteq  [0, 1]$ such that $m(A\cap B) = 0$ and $m(supp(W)\setminus ((A\times B) \cup (B\times A)) ) = 0$. In such a case, we say that $W$ is $(A, B)$-bipartite.
\end{definition}

Note that if $W$ is a $(A, B)$-bipartite graphon, then $m(A)+m(B) = m(A\cup B) \leq 1$.
In order to generalize the bipartite coverings of complete graphs, we define the following concept.
We say that  a countable set $\mathcal{W}$ of graphons is a \emph{separating system} if each $W \in \mathcal{W}$ is a bipartite graphon. By giving an ordering on the set $\mathcal{W}$, we identify the separating system with a sequence $W_1,W_2,\dots $ of bipartite graphons.
In addition, we call it an \emph{$\eta$-full separating system} if 
$$\int_{[0,1]^2} \calW^{\Sigma}(x, y) dxdy \geq \eta,$$
where $\calW^{\Sigma}(x, y) \defeq \min\{1, \enspace \sum_{W\in \calW} W(x, y)\}$ for all $(x,y)\in [0,1]^2$.
For a given bipartite graphon $W$, we can choose a pair of disjoint measurable sets $(A_W,B_W)$ such that $m(A_W\cup B_W)$ is minimum among all choices where $W$ is $(A_W,B_W)$-bipartite. We assume that such a choice is taken for all graphon $W$ and we will say that $\sum_{W\in \mathcal{W}} m(A_W \cup B_W)$ is the \emph{capacity} of the separating system $\mathcal{W}$.
We are now ready to state our theorem regarding graphon separating systems.

\begin{theorem} \label{thm:main_noweight}
Suppose $0<\ve \leq 1$ and $\mathcal{W}$ is a $(1-\ve)$-full separating system where each $W_i\in \mathcal{W}$ is a $(A_i,B_i)$-bipartite graphon. Then we have $\sum_{k = 1}^{|\mathcal{W}|} m(A_k \cup B_k) \geq \log \frac{1}{\ve}$
\end{theorem}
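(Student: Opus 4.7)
The plan is to deduce \Cref{thm:main_noweight} from the graph version \Cref{lem:katona-szemeredi} via a random sampling argument. We may assume $\sum_k m(A_k\cup B_k)<\infty$, which implies $\sum_\ell W_\ell(x,y)<\infty$ for almost every $(x,y)\in[0,1]^2$. Fix a large integer $n$ and sample i.i.d.\ uniform points $X_1,\ldots,X_n\in[0,1]$. For each pair $i<j$, independently include the edge $\{i,j\}$ in a random simple graph $G_n$ with probability $\calW^{\Sigma}(X_i,X_j)$; conditional on inclusion (which forces $\sum_\ell W_\ell(X_i,X_j)>0$), assign the edge to a single index $k$ chosen with probability $W_k(X_i,X_j)/\sum_\ell W_\ell(X_i,X_j)$. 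Set $\tilde{A}_k=\{i:X_i\in A_k\}$ and $\tilde{B}_k=\{i:X_i\in B_k\}$, and let $\tilde{H}_k$ be the graph on $\tilde{A}_k\cup\tilde{B}_k$ whose edges are those assigned to $k$.

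Because $W_k$ is $(A_k,B_k)$-bipartite, any edge with $W_k(X_i,X_j)>0$ has $\{X_i,X_j\}$ split between $A_k$ and $B_k$, so each $\tilde{H}_k$ is a bipartite graph with parts $(\tilde{A}_k,\tilde{B}_k)$, and $\{\tilde{H}_k\}$ is by construction a bipartite covering of $G_n$. The basic expectation bounds are
\[
\mathbb{E}\Bigl[\sum_k |V(\tilde{H}_k)|\Bigr] \le \sum_k \mathbb{E}\bigl[|\tilde{A}_k|+|\tilde{B}_k|\bigr] = n\sum_k m(A_k\cup B_k),\quad \mathbb{E}[e(G_n)]=\binom{n}{2}\int\calW^{\Sigma}\,dxdy \ge \binom{n}{2}(1-\varepsilon).
\]
Writing $\varepsilon_n:=1-2e(G_n)/(n(n-1))$, we have $n^2-2e(G_n)=n(1+\varepsilon_n(n-1))$ and $\mathbb{E}[\varepsilon_n]\le\varepsilon$. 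Applying \Cref{lem:katona-szemeredi} to $G_n$ and $\{\tilde{H}_k\}$ yields, almost surely,
\[
\sum_k |V(\tilde{H}_k)| \;\ge\; n\log \frac{n^2}{n^2-2e(G_n)} \;=\; n\log n - n\log\bigl(1+\varepsilon_n(n-1)\bigr).
\]

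Taking expectations and using the concavity of $\log(1+\cdot)$ in Jensen's inequality gives
\[
n\sum_k m(A_k\cup B_k) \;\ge\; n\log n - n\log\bigl(1+\varepsilon(n-1)\bigr) \;=\; n\log\frac{n}{1+\varepsilon(n-1)};
\]
dividing by $n$ and letting $n\to\infty$ (so that $\log(n/(1+\varepsilon(n-1)))\to \log(1/\varepsilon)$) yields the claim $\sum_k m(A_k\cup B_k)\ge\log(1/\varepsilon)$. The main subtlety is the randomised edge-assignment: taking the inclusion probability to be $\calW^{\Sigma}(X_i,X_j)$ and then distributing proportionally to $W_k(X_i,X_j)/\sum_\ell W_\ell(X_i,X_j)$ is exactly what simultaneously ensures the correct expected edge density in $G_n$, the covering property of $\{\tilde{H}_k\}$, and the individual bipartite structure needed to control $\mathbb{E}[|V(\tilde{H}_k)|]$ by $n\,m(A_k\cup B_k)$.
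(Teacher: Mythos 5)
Your argument is correct, and it takes a genuinely different route from the paper's. The paper proves \Cref{thm:main_noweight} by developing a graphon-level inequality (\Cref{lem:graphon_lemma}) directly: a uniformly random choice of sides $C_i\in\{A_i,B_i\}$ for each bipartite graphon, a carefully weighted ``$\gamma$-function,'' an integral normalisation, and two applications of Jensen's inequality (once in the logarithm, once on the degree function $d_{\calW^{\Sigma}}$); \Cref{thm:main_noweight} then falls out as the $\delta=1$ specialisation, followed by a limit over finite truncations $\calW_n$. You instead \emph{reduce} the graphon statement to the already-known finite-graph inequality of Katona--Szemer\'edi (\Cref{lem:katona-szemeredi}) via $W$-random sampling: draw $n$ uniform points, build $G_n$ by keeping each pair with probability $\calW^{\Sigma}(X_i,X_j)$, split edges among the $W_k$ proportionally to $W_k/\sum_\ell W_\ell$ so that each $\tilde H_k$ is automatically bipartite, apply \Cref{lem:katona-szemeredi} to the sample, take expectations using the concavity of $\log(1+\cdot)$, and send $n\to\infty$. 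The countable-family subtleties (a.s.\ finiteness of $\sum_\ell W_\ell$ off a null set given $\sum_k m(A_k\cup B_k)<\infty$, Tonelli for the two expectation identities, restricting to the finitely many $k$ with $e(\tilde H_k)>0$ before invoking \Cref{lem:katona-szemeredi}) all check out. The trade-off: the paper's self-contained route yields the stronger \Cref{lem:graphon_lemma} with the $\delta$-dependent correction term, which is essential for \Cref{thm:main_weight}; your sampling reduction is shorter and more transparent but is specific to \Cref{thm:main_noweight} and relies on \Cref{lem:katona-szemeredi}, which the paper states without proof.
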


Note that a bipartite covering $\mathcal{H}$ of a complete graph $K_n$ provides a $(1-\frac{1}{n})$-full separating system, as $\int_{[0,1]^2}\calW^{\Sigma}(x,y) dxdy = 1-\frac{1}{n}$ holds for $\calW=\{ W_H: H\in \mathcal{W}\}$. 
Hence \Cref{thm:main_noweight} implies 
\Cref{lem:Hansel_lemma}.
Moreover, we prove the following weighted version as well.
\begin{theorem} \label{thm:main_weight}
    Suppose $0<\ve <1$ and $0<\delta \leq 1$. Suppose that $\mathcal{W}$ is a $(1-\ve)$-full separating system where each $W_i\in \mathcal{W}$ is   $(A_i, B_i)$-bipartite and $\|W\|_{\infty}\leq \delta$.
 Then we have
 $$\sum_{k = 1}^{|\mathcal{W}|} m(A_k \cup B_k) \geq \max\left\{\frac{2(1-\ve)}{\delta} ,   \enspace  
 \log\frac{1}{\ve} + \lfloor \frac{1-\delta}{2\delta} \rfloor \log\left(\delta \log\frac{1}{\ve} \right) - \frac{1}{\delta} -1\right\}.$$
\end{theorem}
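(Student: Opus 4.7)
My plan is to establish each of the two bounds in the maximum separately.

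The first bound $2(1-\varepsilon)/\delta$ follows from a direct volume estimate. Since each $W_i$ is $(A_i,B_i)$-bipartite with $A_i, B_i$ disjoint up to a measure-zero set and $\|W_i\|_\infty \le \delta$, we have $\int W_i \le 2\delta\, m(A_i) m(B_i) \le (\delta/2)(m(A_i)+m(B_i))^2 \le (\delta/2)\, m(A_i \cup B_i)$, using $m(A_i \cup B_i) \le 1$ in the last step. Summing and combining with $\int \mathcal{W}^\Sigma \le \sum_i \int W_i$ and the $(1-\varepsilon)$-fullness hypothesis yields the bound after rearrangement.

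For the second bound, the plan is a weak-survival random-coloring argument generalizing Hansel's probabilistic proof to handle the $\|W_i\|_\infty \le \delta$ constraint. Set $k = \lfloor (1-\delta)/(2\delta) \rfloor$ and independently sample $\sigma_i \in \{0,1\}$ uniformly for each $W_i \in \mathcal{W}$. Call $x \in [0,1]$ a \emph{$k$-weak survivor} of $\sigma$ if there are at most $k$ indices $i$ at which $x$ is ``inconsistent'' with $\sigma_i$, meaning $x \in A_i$ and $\sigma_i = 1$, or $x \in B_i$ and $\sigma_i = 0$. With $N(x) := |\{i : x \in A_i \cup B_i\}|$ and $p(d) = \sum_{j=0}^k \binom{d}{j} 2^{-d}$, the set $S_\sigma^{(k)}$ of $k$-weak survivors satisfies $\mathbb{E}_\sigma\, m(S_\sigma^{(k)}) = \int p(N(x))\, dx$, while $\sum_i m(A_i \cup B_i) = \int N(x)\, dx$. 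The key pigeonhole observation is that if $x, y$ are both $k$-weak survivors then $D(x,y) := |\{i : (x,y) \in (A_i \times B_i) \cup (B_i \times A_i)\}| \le 2k$, since each distinguishing index contributes an inconsistency to at least one of $x, y$. Since $\|W_i\|_\infty \le \delta$ implies $D(x,y) \ge \mathcal{W}^\Sigma(x,y)/\delta$, Markov's inequality applied to $1-\mathcal{W}^\Sigma$ (whose integral is $\le \varepsilon$) yields $m(\{D \le 2k\}) \le \varepsilon/(1-2k\delta) \le \varepsilon/\delta$ for this choice of $k$. Hence by Fubini, $\mathbb{E}_\sigma [m(S_\sigma^{(k)})^2] \le \varepsilon/\delta$.

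The final step, and the main analytical challenge, is to translate this second-moment bound into the claimed lower bound on $\int N(x)\, dx$. My plan is to use $p(d) \ge \binom{d}{k} 2^{-d}$ for $d \ge k$ (together with $p(d) = 1$ for $d \le k$, which bounds the contribution of the exceptional set $\{N(x) < k\}$), and to combine this with the log-concavity of $\binom{d}{k} 2^{-d}$ and a Jensen/AM-GM estimate to extract a bound of the required form. The main obstacle is recovering the leading $\log(1/\varepsilon)$ term together with the additive correction $-1/\delta - 1$: a direct Cauchy-Schwarz step $\mathbb{E}_\sigma[m(S_\sigma^{(k)})] \le \sqrt{\mathbb{E}_\sigma[m(S_\sigma^{(k)})^2]}$ loses a factor of two in the logarithm, so the sharp bound likely requires either a finer moment argument or a discretization of $[0,1]$ at scale $n \approx 1/\varepsilon$ reducing to a multigraph bipartite covering problem on $[n]$, where the pointwise bound ``at most one $k$-weak survivor'' is tight and directly gives $\sum_v p(d(v)) \le 1$, from which a standard AM-GM on $\prod_v p(d(v))$ recovers the required main and subleading terms; a limit argument as $n \to \infty$ then transfers the bound back to the graphon setting.
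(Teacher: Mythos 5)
Your first bound is correct and essentially identical to the paper's: both use the bipartite density inequality $\int W_i \le \frac{\delta}{2}m(A_i\cup B_i)^2 \le \frac{\delta}{2}m(A_i\cup B_i)$ and the $(1-\ve)$-fullness. Your random-coloring setup for the second bound — uniform $\sigma_i$, the notion of $k$-weak survivor with $k=\lfloor(1-\delta)/(2\delta)\rfloor$, the observation that two $k$-weak survivors force $D(x,y)\le 2k$, and the Markov step $m(\{D\le 2k\})\le \ve/(1-2k\delta)\le\ve/\delta$ — is also correct and is the right family of ideas.

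The gap is the one you yourself flag: converting the second-moment bound $\mathbb{E}_\sigma[m(S_\sigma^{(k)})^2]\le\ve/\delta$ into the claimed lower bound. The direct route via $\mathbb{E}[m(S_\sigma^{(k)})]\le\sqrt{\ve/\delta}$ followed by Jensen gives a leading term of only $\frac{1}{2}\log\frac{1}{\ve}$, not $\log\frac{1}{\ve}$, and your proposed fix does not close it. The discretization-to-$K_n^\lambda$ idea fails because $(1-\ve)$-fullness is an \emph{average} hypothesis: it does not force $D(x,y)>2k$ for all (discretized) pairs, so the pointwise ``at most one $k$-weak survivor'' assertion on which $\sum_v p(d(v))\le 1$ rests is simply false for a general $(1-\ve)$-full system, and no limiting argument can restore it. The paper sidesteps this by never appealing to a second moment. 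Instead, conditional on each coloring it proves the pointwise inequality $\int_{E}\frac{\gamma(x)+\tau_\delta}{1-d_{\mathcal{W}^\Sigma}(x)}\,dx\le 1$, where $E$ is (up to a small shift in the threshold) your weak-survivor set and $\gamma(x)+\tau_\delta$ is a linearly decreasing weight in the number of inconsistencies; this is deduced from the bound $1-d_{\mathcal{W}^\Sigma}(x)\ge(\gamma(x)+\tau_\delta)\,m(E)$ for $x\in E$. After taking expectations over the coloring, a single Jensen step yields $\int -\log(1-d_{\mathcal{W}^\Sigma}(x))\,dx + \int\log\mathbb{E}[\gamma^*(x)]\,dx\le\int N(x)\,dx$, which carries the full $\log\frac{1}{\ve}$ because the $(1-d_{\mathcal{W}^\Sigma})^{-1}$ weight replaces the lost square root. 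A second missing ingredient, which you would need in any route: to turn the binomial term $\binom{N(x)}{k}2^{-N(x)}$ into $k\log\log\frac{1}{\ve}$ one must show that the set $\{x:N(x)<\frac{1}{2}\log\frac1{\ve}\}$ has small measure; the paper proves this (Claim~\ref{cl:1}) by restricting the graphons to that set, rescaling to $[0,1]$, and invoking \Cref{thm:main_noweight} on the rescaled system. Your plan has no analogue of this step — the exceptional set you mention, $\{N(x)<k\}$, is the wrong one and is not known a priori to be small. Without both the refined weighted inequality and the low-degree measure estimate, the argument as proposed yields at best a constant-factor-off version of the theorem.
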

Again, the first term $\frac{2(1-\ve)}{\delta}$ is bigger if $\delta \leq  2( \log \frac{1}{\ve})^{-1})$.
Again, we can turn a bipartite covering $\mathcal{H}$ into a separating system $\mathcal{W}=\{ W_H: H\in  \mathcal{H}\}$, then Theorem~\ref{thm:graph_upperbound} implies that Theorem~\ref{thm:main_weight} is asymptotically tight for all $\delta=o( (\log \frac{1}{\ve})^{-1} )$ and $\delta = \Omega( (\log \frac{1}{\ve})^{ -1+\ve} ) $.


As graphons generalize multigraphs, Theorem~\ref{thm:main_weight} implies \Cref{cor:graph_lowerbound}. Indeed, for a given bipartite covering $\{H_1,\dots, H_m\}$ of $K^{\lambda}_n$, it is easy to see that $\{ \frac{1}{\lambda} W_{H_1},\dots, \frac{1}{\lambda}W_{H_m}\}$ is a $(1-\frac{1}{n})$-full separating system of graphons. Moreover, \Cref{thm:main_weight} implies the following lemma, which is a generalization of Lemma~\Cref{lem:katona-szemeredi} and it implies \Cref{cor:graph_lowerbound}.

\begin{lemma}\label{thm:edge-density-hansel}
    Let $G$ be a $n$-vertex multigraph with maximum multiplicity $\lambda$ and let $T= \frac{\lambda n^2}{\lambda n^2 - 2e(G)}.$ 
    Let $\mathcal{H}=\{ H_1, \dots, H_m\}$ be a bipartite covering of $G$. Then we have 
    \begin{equation*}
        \capa(\mathcal{H})  \geq \max\left\{ 2(1-\frac{1}{T})\lambda n ,\enspace n\left(\log T + \lfloor \frac{\lambda-1}{2} \rfloor\log \left(\frac{\log T}{\lambda} \right) - \lambda-1 \right)\right\}.
    \end{equation*}
\end{lemma}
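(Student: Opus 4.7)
The plan is to derive \Cref{thm:edge-density-hansel} directly from \Cref{thm:main_weight} by encoding a bipartite covering of a multigraph as a weighted graphon separating system, in the manner already sketched in the paragraph preceding the statement for \Cref{cor:graph_lowerbound}.

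Given a bipartite covering $\mathcal{H}=\{H_1,\dots,H_m\}$ of the $n$-vertex multigraph $G$, I would set $\mathcal{W}=\{\tfrac{1}{\lambda}W_{H_1},\dots,\tfrac{1}{\lambda}W_{H_m}\}$. Each $\tfrac{1}{\lambda}W_{H_i}$ is a bipartite graphon with $\|\tfrac{1}{\lambda}W_{H_i}\|_{\infty}\le 1/\lambda=:\delta$. Choosing $A_i,B_i$ to be the unions of the intervals $[\tfrac{v-1}{n},\tfrac{v}{n})$ corresponding to the two parts of $H_i$ (after discarding isolated vertices), we obtain $m(A_i\cup B_i)\le |H_i|/n$, and therefore
$$\sum_{i=1}^{m} m(A_i\cup B_i)\ \le\ \frac{\capa(\mathcal{H})}{n}.$$

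Next I would verify fullness. For each ordered pair $(i,j)$ with $i\ne j$, the covering property guarantees that the edge $ij$, of multiplicity $\mu_{ij}\le \lambda$ in $G$, appears in at least $\mu_{ij}$ of the $H_k$, so on the square $[\tfrac{i-1}{n},\tfrac{i}{n})\times [\tfrac{j-1}{n},\tfrac{j}{n})$ we have $\mathcal{W}^{\Sigma}\ge \mu_{ij}/\lambda$. Integrating and using $2e(G)=\sum_{i\ne j}\mu_{ij}$ gives
$$\int_{[0,1]^2}\mathcal{W}^{\Sigma}(x,y)\,dx\,dy\ \ge\ \frac{1}{n^2}\sum_{i\ne j}\frac{\mu_{ij}}{\lambda}\ =\ \frac{2e(G)}{\lambda n^2}\ =\ 1-\frac{1}{T},$$
so $\mathcal{W}$ is a $(1-1/T)$-full separating system.

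Finally I would apply \Cref{thm:main_weight} with $\ve=1/T$ and $\delta=1/\lambda$. A direct substitution gives $\tfrac{2(1-\ve)}{\delta}=2(1-1/T)\lambda$, $\log(1/\ve)=\log T$, $\lfloor\tfrac{1-\delta}{2\delta}\rfloor=\lfloor\tfrac{\lambda-1}{2}\rfloor$, $\log(\delta\log(1/\ve))=\log(\tfrac{\log T}{\lambda})$, and $\tfrac{1}{\delta}+1=\lambda+1$; multiplying the resulting lower bound on $\sum_i m(A_i\cup B_i)$ by $n$ yields the claimed inequality for $\capa(\mathcal{H})$. The whole argument is a clean reduction; the only small bookkeeping is the degenerate case $e(G)=0$ (where $T=1$ and the right-hand side is vacuous), which is handled by inspection, and checking that $0<\ve<1$ and $0<\delta\le 1$ so that the hypotheses of \Cref{thm:main_weight} are met. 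I do not anticipate any substantial obstacle beyond this.
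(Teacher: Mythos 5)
Your proposal is correct and takes essentially the same route as the paper: form $\mathcal{W}=\{\tfrac{1}{\lambda}W_{H_1},\dots,\tfrac{1}{\lambda}W_{H_m}\}$, observe it is a $(1-\tfrac1T)$-full separating system bounded above by $\delta=1/\lambda$ with $\sum_i m(A_i\cup B_i)\le\capa(\mathcal{H})/n$, and plug $\ve=1/T$, $\delta=1/\lambda$ into \Cref{thm:main_weight}. The additional bookkeeping you supply (checking $0<\ve<1$, $0<\delta\le1$, and treating $e(G)=0$ separately) is a sensible elaboration of the sketch the paper gives.
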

\begin{proof}
Consider $\mathcal{W}=\{ \frac{1}{\lambda} W_{H_1},\dots, \frac{1}{\lambda} W_{H_m}\}$, then we have a natural bipartition $(A_i,B_i)$ for $W_{H_i}$ where $|H_i| = n\cdot m(A_i\cup B_i)$.
Then every $W\in \mathcal{W}$ satisfies $\|W\|_{\infty}\leq 1/\lambda$ and they are all bipartite graphons. 
Moreover, we have 
$$ \int_{[0, 1]^2} \calW^{\Sigma} dX \geq \frac{2e(G)}{\lambda n^2},$$
hence $\mathcal{W}$ forms a $(1-\frac{1}{T})$-full separating system. 
Thus \Cref{thm:main_weight} implies the desired inequality.
\end{proof}

In \Cref{sec:prelim}, we collect some preliminaries. 
In \Cref{sec:upper}, we prove upper bounds showing that our main results are asymptotically tight for $\lambda$ is either $O(\log^{1-\ve} n)$ or $\omega(\log n)$.
Two main theorems \Cref{thm:main_noweight} and \Cref{thm:main_weight} will be proven in \Cref{sec:proof}. Finally, in \Cref{sec:geometry}, we present an application of our results on some problems in discrete geometry.

\section{Preliminaries}\label{sec:prelim}
In this section, we want to collect some notations.
For $A\subseteq [0,1]^2$, we write $\mathbf{1}_{A}$ to denote the characteristic function of $A$, which satisfies $\mathbf{1}_{A}(x,y)=1$ if $(x,y)\in A$ and $\mathbf{1}_{A}(x,y)=0$ otherwise.
As graphons are generalizations of graphs, we can generalize many graph parameters to graphons.
Since every graphon $W$ is a measurable function that is bounded by the integrable function $\mathbf{1}_{[0, 1]^2}$, $W$ is also integrable. Thus, by Fubini's theorem, the degree function $d_W(x)$ defined below is defined almost everywhere.
\begin{definition}
    Let $W$ be a graphon. Then for each $x\in [0, 1]$, we denote $d_W (x) \defeq \int_{[0, 1]} W(x, y) dy$ as a degree function of $W$ at $x$.
\end{definition}

For a given countable collection $\mathcal{W}$ of graphons, we say that it is \emph{bounded above by $\delta$} if every $W\in \calW$ satisfies $\|W\|_{\infty} \leq \delta$.
We define a function $\nw: [0,1]\to \mathbb{N}\cup \{\infty\}$ as $\nw(x) \defeq  |\{W\in \calW: x\in\ A_W\cup B_W\}|.$ If a sum  $\sum_{i=1}^{\infty} f(i)$ of nonnegative numbers diverges to infinity, then we write $\sum_{i=1}^{\infty} f(i) = \infty.$ 
Then the following lemma holds, where $\int_{[0, 1]} \nw(x) dx$ is $\infty$ if (but not only if) $\nw(x) =\infty$ for positive measure of points $x\in [0,1]$.
As it is easy to see that the function $\nw(x)$ is a measurable function, Fubini's theorem implies that the following lemma holds.

\begin{lemma} \label{lem:double_counting}
    Let $\calW$ be a separating system of graphons. Then $\sum_{W\in \calW} m(A_W\cup B_W) = \int_{[0, 1]} \nw(x) dx.$ 
\end{lemma}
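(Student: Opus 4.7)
The plan is to realize both sides as the integral (respectively sum of integrals) of the same family of indicator functions, and then invoke Tonelli's theorem to interchange the summation with the integral. This is essentially a bookkeeping computation with no geometric content beyond the definitions.

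First I would rewrite the left-hand side using the definition of Lebesgue measure: for each $W\in \calW$,
$$m(A_W\cup B_W)=\int_{[0,1]}\mathbf{1}_{A_W\cup B_W}(x)\,dx,$$
so summing over $\calW$ gives
$$\sum_{W\in \calW} m(A_W\cup B_W)=\sum_{W\in \calW}\int_{[0,1]}\mathbf{1}_{A_W\cup B_W}(x)\,dx.$$
Second, I would observe that by the definition of $\nw$, we have the pointwise identity
$$\nw(x)=\sum_{W\in \calW}\mathbf{1}_{A_W\cup B_W}(x)\qquad\text{for every } x\in[0,1],$$
where the sum is interpreted as $+\infty$ whenever the series does not converge, consistent with the convention fixed in the excerpt.

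Third, since $\calW$ is countable and each $\mathbf{1}_{A_W\cup B_W}$ is a non-negative measurable function on $[0,1]$, Tonelli's theorem (applied to counting measure on $\calW$ and Lebesgue measure on $[0,1]$) lets me exchange the summation and the integration without any integrability hypothesis:
$$\sum_{W\in \calW}\int_{[0,1]}\mathbf{1}_{A_W\cup B_W}(x)\,dx=\int_{[0,1]}\sum_{W\in \calW}\mathbf{1}_{A_W\cup B_W}(x)\,dx=\int_{[0,1]}\nw(x)\,dx,$$
which is exactly the claimed identity (both sides may be $+\infty$, and the convention above ensures this case is handled consistently).

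The only thing that needs to be checked to invoke Tonelli is measurability of $\nw$, but this is immediate from the fact that $\nw$ is a countable sum of non-negative measurable functions; the excerpt already flags this point. So there is no real obstacle, and the statement reduces to a clean application of Tonelli.
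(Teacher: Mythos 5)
Your proof is correct and follows exactly the same route as the paper: express each $m(A_W\cup B_W)$ as an integral of an indicator, recognize $\nw$ as the pointwise sum of those indicators, and interchange sum and integral by Tonelli. The paper labels the interchange as Fubini's theorem, but your invocation of Tonelli (avoiding any integrability hypothesis for the non-negative case) is the cleaner and more precise phrasing of the same step.
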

    

With these definitions and \Cref{lem:double_counting}, we can investigate the value of $\int_{[0, 1]} \nw(x) dx$ instead of $\sum_{W\in \calW} m(A_W\cup B_W)$.

\section{Existence of asymptotically optimal bipartite coverings of $K_n^{\lambda}$}\label{sec:upper}

In this section, we prove \Cref{thm:graph_upperbound}. For a given multigraph $G$ and an edge $e=xy \in \binom{V(G)}{2}$, we write $\mu_G(e)$ be the multiplicity of the edge $e$. For $x, y\in V(G)$, we consider $\mu_G(xy) = 0$ if and only if $xy \notin E(G).$ We simply write $\mu(e)$ if $G$ is clear from the context.

For two multigraphs $G$ and $H$, we write $G-H$ to denote the multigraph such that an edge $e \in \binom{V(G)}{2}$ is in $G-H$ with multiplicity $\mu_{G-H}(e) = \max\{ \mu_G(e)-\mu_H(e), 0\}.$ 

Let $\alpha$ be a real number. For a multigraph $G$ and a vertex $v\in V(G)$, we define the $\alpha$-exponential degree of a vertex $v$ as
$$d^{\rm ex}_G(\alpha,v) = \sum_{u\in N(v)} \alpha^{\mu(uv)}.$$
Note that if $w$ is not adjacent to $v$, then the pair $vw$ contributes zero to the exponential degree of $v$. By this definition, degree of $v$ is same as $1$-exponential degree of $v.$
Let $\Delta^{\rm ex}(\alpha, G)\defeq \max\{ d^{\rm ex}(\alpha, v): v\in V(G)\}$ be the \emph{maximum $\alpha$-exponential degree} of $G$. Note that any graph with at least one edge has the maximum exponential degree at least $\alpha$.

By modifying Erd\H{o}s' proof on max cut in~\cite{erdos1965some}, we can prove the following lemma.
\begin{lemma}\label{lem:erdos_multi_dense_bipartite}
Let $\alpha > 1$.  For every multigraph $G$, there is a simple bipartite subgraph $H$ of $G$ such that for every $v\in V(G)$, 
    $$d^{\rm ex}_{G-H}(v) \leq \frac{1}{2}(1+\alpha^{-1})d^{\rm ex}_G(v).$$ 
    In particular, $\Delta^{\rm ex}(G-H) \leq \frac{1}{2}(1+\alpha^{-1})\Delta^{\rm ex}(G).$
\end{lemma}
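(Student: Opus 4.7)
The plan is to follow Erdős' classical max-cut argument, but with the cut weighted by the $\alpha$-exponentials of the edge multiplicities so that local optimality picks up the right weights. Concretely, I would take a bipartition $(A,B)$ of $V(G)$ that maximizes
$$\Phi(A,B) \defeq \sum_{\substack{uv \in E(G) \\ u \in A,\, v \in B}} \alpha^{\mu(uv)},$$
and let $H$ be the simple bipartite graph with vertex classes $(A,B)$ whose edges are precisely those pairs $uv$ with $u \in A$, $v \in B$ and $\mu(uv) \geq 1$. (We may assume $G$ is finite, as it will be in the applications, so the maximizer exists.)

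For a fixed vertex $v \in V(G)$, write $C(v)$ and $NC(v)$ for the sums of $\alpha^{\mu(uv)}$ over the neighbors $u$ of $v$ that lie on the opposite and the same side of the partition as $v$, respectively, so that $d^{\rm ex}_G(v) = C(v) + NC(v)$. The key step is the standard local-switching lemma: moving $v$ to the other side changes $\Phi$ by exactly $NC(v) - C(v)$, so optimality of $(A,B)$ forces $C(v) \geq NC(v)$.

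To finish, I would observe that each crossing pair $uv$ contributes $\alpha^{\mu(uv)-1} = \alpha^{-1}\alpha^{\mu(uv)}$ to $d^{\rm ex}_{G-H}(v)$ (and in the edge case $\mu(uv)=1$ the pair drops out of $G-H$ entirely and contributes $0$, which only helps), while each non-crossing pair contributes $\alpha^{\mu(uv)}$ unchanged. Hence
$$d^{\rm ex}_{G-H}(v) \leq \alpha^{-1} C(v) + NC(v).$$
Subtracting the right-hand side from $\tfrac{1}{2}(1+\alpha^{-1})(C(v)+NC(v))$ simplifies to $\tfrac{1}{2}(1-\alpha^{-1})(C(v)-NC(v))$, which is nonnegative because $\alpha > 1$ and $C(v) \geq NC(v)$; this gives the claimed per-vertex bound, and taking the maximum over $v$ yields the ``in particular'' statement on $\Delta^{\rm ex}$.

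I do not expect a real obstacle here: the only substantive design decision is the choice of the exponential weights $\alpha^{\mu(uv)}$ in the max cut, which is exactly what is needed to align Erdős' switching inequality with the exponential degree. The rest is bookkeeping, and the mild asymmetry at $\mu(uv)=1$ only strengthens the first inequality.
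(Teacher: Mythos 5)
Your proof is correct and follows essentially the same route as the paper's: take the bipartition maximizing the $\alpha$-exponentially weighted cut, let $H$ be the simple graph of crossing pairs, use local optimality (via the switching argument) to get that each vertex has at least half of its exponential degree across the cut, and then do the elementary algebra with $C(v)\geq NC(v)$. Your version is in fact slightly more careful than the paper's, explicitly handling the $\mu(uv)=1$ boundary case (where a crossing pair disappears from $G-H$ rather than contributing $\alpha^{0}$) and getting the inequality direction right where the paper has a visible sign typo ($\geq$ where $\leq$ is meant).
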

\begin{proof}
    Let $A, B$ be a partition of $V(G)$ which maximizes $\sum_{e\in E[A, B]} \alpha^{\mu(e)}$ in $G$. 
    Let $H$ be the simple graph with edges in $\{ e: e\in E(G[A, B])\}.$ 
    
    The supposed maximality ensures that every $v\in A$ and $u\in B$ satisfies  
    $$\sum_{w\in N_G(v)\cap B} \alpha^{\mu(vw)} \geq \frac{1}{2}d^{\rm ex}_G(v) \enspace \text{ and }\enspace  \sum_{w\in N_G(u)\cap A} \alpha^{\mu(uw)} \geq \frac{1}{2}d^{\rm ex}_G(u).$$ 
    Hence, for each vertex $v$, we have 
    $$d^{\rm ex}_{G-H}(v) = \sum_{u\in N_G(v)\cap A} \alpha^{\mu(uv)} + \sum_{u\in N_{G}(v)\cap B} \alpha^{\mu(uv) - 1}\geq  (\frac{1}{2}+ \frac{1}{2}\alpha^{-1}) d^{\rm ex}_G(v).$$
    Similarly, we also have $d^{\rm ex}_{G-H}(u) \geq \frac{1}{2}(1+\alpha^{-1})d^{\rm ex}_G(v)$ for all $u\in B$. This proves the lemma.
\end{proof}
By repeatedly applying the above lemma, we obtain the following.

\begin{lemma}\label{lem:cover-bipartite-maxdeg}
   Let $\alpha >1$. Let $G$ be a multigraph with ${\Delta}^{\rm ex}(\alpha, G) = {\Delta}^{\rm ex} \geq \alpha.$ Then, there exist a bipartite covering $\mathcal{H}=\{H_1,\dots, H_m\}$ of $G$ with $m \leq \frac{\log \Delta^{\rm ex} }{1-\log(1+\alpha^{-1})}.$
\end{lemma}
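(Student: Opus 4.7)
The plan is to iterate Lemma~\ref{lem:erdos_multi_dense_bipartite}, peeling off one simple bipartite subgraph at a time and tracking how the maximum $\alpha$-exponential degree contracts. Set $G_0 \defeq G$; while $G_{i-1}$ still has an edge, apply Lemma~\ref{lem:erdos_multi_dense_bipartite} to obtain a simple bipartite subgraph $H_i \subseteq G_{i-1}$ and put $G_i \defeq G_{i-1} - H_i$. The ``in particular'' clause of that lemma yields $\Delta^{\rm ex}(\alpha, G_i) \leq \frac{1}{2}(1+\alpha^{-1}) \Delta^{\rm ex}(\alpha, G_{i-1})$, so induction on $i$ gives the geometric decay
\[
\Delta^{\rm ex}(\alpha, G_i) \leq \left(\frac{1+\alpha^{-1}}{2}\right)^i \Delta^{\rm ex}.
\]

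Let $m$ be the first index with $G_m$ edgeless. Because $G_{m-1}$ still contains an edge, the observation preceding Lemma~\ref{lem:erdos_multi_dense_bipartite} --- any multigraph with at least one edge has $\alpha$-exponential degree $\geq \alpha$ at some vertex --- gives $\Delta^{\rm ex}(\alpha, G_{m-1}) \geq \alpha$. Combining with the decay estimate produces $\alpha \leq \left(\frac{1+\alpha^{-1}}{2}\right)^{m-1} \Delta^{\rm ex}$. Taking $\log_2$ of both sides (using $\alpha > 1$, so $1 - \log(1+\alpha^{-1}) > 0$) and rearranging gives
\[
m - 1 \leq \frac{\log \Delta^{\rm ex} - \log \alpha}{1 - \log(1+\alpha^{-1})}.
\]

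To convert this into the target bound $m \leq \frac{\log \Delta^{\rm ex}}{1 - \log(1+\alpha^{-1})}$, it suffices to verify $1 - \log(1+\alpha^{-1}) \leq \log \alpha$, equivalently $\log(\alpha + 1) \geq 1$, which holds for all $\alpha \geq 1$. Finally, to confirm $\mathcal{H} = \{H_1, \dots, H_m\}$ is genuinely a bipartite covering of $G$: each $H_i$ is simple, so subtracting $H_i$ from $G_{i-1}$ lowers the multiplicity of every edge of $H_i$ by exactly one; since $G_m$ has no edges, every $e \in E(G)$ belongs to exactly $\mu_G(e)$ of the $H_i$'s, as required by the definition of a bipartite covering. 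The main engine is the geometric-decay argument powered by Lemma~\ref{lem:erdos_multi_dense_bipartite}; I foresee no real obstacle, and the only slightly delicate step is passing from the natural bound involving $\log \Delta^{\rm ex} - \log \alpha$ to the cleaner $\log \Delta^{\rm ex}$ via the identity $\log(\alpha + 1) \geq 1$.
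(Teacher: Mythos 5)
Your proof is correct and follows the paper's own strategy: iterate Lemma~\ref{lem:erdos_multi_dense_bipartite} to get the geometric decay of the maximum $\alpha$-exponential degree, stop once it drops below $\alpha$, and count the steps. You are in fact a bit more careful than the paper's terse write-up in making explicit the passage from $m-1 \leq \frac{\log \Delta^{\rm ex}-\log\alpha}{1-\log(1+\alpha^{-1})}$ to the stated bound via $\log(\alpha+1)\geq 1$, which the paper glosses over; substantively it is the same argument.
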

\begin{proof}
    Let $G_0 = G$. 
    For given $G_i$ with  ${\Delta}^{\rm ex}(G_i) \leq  (\frac{1+\alpha^{-1}}{2})^i{\Delta}^{\rm ex}$, we apply \Cref{lem:cover-bipartite-maxdeg} to choose $H_i$ so that $G_{i+1}=G_i - H_i$ satisifes 
    ${\Delta}^{\rm ex}(G_i) \leq  (\frac{1+\alpha^{-1}}{2})^{i+1}{\Delta}^{\rm ex}$.
    Repeating this, then we will have $\Delta^{\rm ex}(G^{m+1})<\alpha$ for some $m\leq \log_{2/(1+\alpha^{-1})}\Delta^{\rm ex} = \frac{\log \Delta^{\rm ex} }{1-\log(1+\alpha^{-1})},$
    implying that $G^m$ has no edges. Thus the set $\{H_1,\dots, H_m\}$ is the desired bipartite covering of $G$.
\end{proof}

We now prove \Cref{thm:graph_upperbound}.
\begin{proof}[Proof of \Cref{thm:graph_upperbound}]
Note that $\Delta^{\rm ex}(\alpha, K_{n}^{\lambda}) = \alpha^{\lambda} (n-1)$ for any $\alpha>1$. 
As each bipartite graph has at most $n$ vertices, we only need to find a bipartite covering $\mathcal{H}$ with size 
$\log (n-1) + (1+ o(1))\lambda \log \log n$ and size $(2+ \frac{40}{\sqrt{k}}) \lambda$ in the each case.

First, consider $\alpha = \log n$. Then \Cref{lem:cover-bipartite-maxdeg} implies that there are bipartite covering $\mathcal{H}$ satisfying the following.
$$|\mathcal{H}| \leq \frac{ \log \Delta^{\rm ex}}{1- \log(1+\alpha^{-1}) } \leq \frac{ \lambda \log \alpha + \log (n-1)}{ 1- \log(e)/\alpha } \leq (1+ \frac{2\log e}{\log n})\left (\log(n-1) + \lambda \log\log n\right),$$
which implies the first bound.

Now, consider the case where $\lambda = k\log (n-1)$ with $k\geq 4$. 
Let $0<\ve \leq \frac{1}{2}$ and $\alpha = \frac{1}{1-\ve}$.
Then \Cref{lem:cover-bipartite-maxdeg} implies that there are bipartite covering $\mathcal{H}$ satisfying the following.
$$|\mathcal{H}| \leq \frac{ \log \Delta^{\rm ex}}{1- \log(1+\alpha^{-1}) } \leq \frac{ \lambda \log(\frac{1}{1-\ve}) + \log (n-1)}{ 1- (1- \frac{\log(e)\ve}{2+\ve}) } \leq \frac{ \log (e) (\ve+\ve^2)\lambda + \log(n-1)}{ \frac{\log(e)\ve}{2+\ve} }
\leq  (2+5\ve) \lambda + \frac{2}{\ve} \log(n-1)
$$
The second and third inequality follows from the taylor expansion of $\log(2+x)$ at $x=0$ and the taylor expansion of $\log(1-x)$ at $x=0$.
By taking $\ve = k^{-1/2}$, we obtain a bipartite covering $\mathcal{H}$ with $|\mathcal{H}| \leq (2+ \frac{7}{\sqrt{k}}) \lambda.$ 
In particular, this shows that there exists a bipartite covering $\mathcal{H}$ wth $|\mathcal{H}| = (2+ \frac{7}{2})4\log(n-1)= 22\log(n-1)$.
Note that a bipartite covering for $K^{4\log (n-1)}_n$ is also a bipartite covering for $K^{\lambda}_n$ for $\lambda \leq 4\log(n-1)$.
Thus, for $1/2\leq k\leq 4$, we have $(2+\frac{30}{\sqrt{k}})k\log(n-1)\leq 22\log(n-1)$.
Thus we obtained a desired biparatite covering.
This finishes the proof.
\end{proof}
\begin{rmk}\label{remark1}
Note that if $\lambda = \log^c n$ for some $\frac{1}{2}\leq c\leq 1$, then we can improve the above bound to $\capa(\mathcal{H}) \leq n\big( \log (n-1) + (c+ o(1))\lambda \log \log n\big)$. Indeed, taking $\alpha = \log^c n$ in the above proof yields this bound. 
Note that even after this improvement, there's still $O(\lambda n\log\log n)$-gap between the bound in \Cref{cor:graph_lowerbound} and the bound in \Cref{thm:graph_upperbound}.
\end{rmk}

\section{Proof of \Cref{thm:main_noweight} and \Cref{thm:main_weight}}\label{sec:proof}

In this section, we prove the \Cref{thm:main_noweight} and \Cref{thm:main_weight}. 
For this, we first collect the following lemma providing a connection between the capacity of the bipartite covering of graphons and function $\nw$ we defined in Section~\ref{sec:prelim}. 

For our convenience, we define the following for a given $0<\delta\leq 1$ and $n\in \mathbb{N}$.
$$\ell_{\delta} := \lfloor \frac{1}{2\delta} \rfloor, \enspace \tau_{\delta} := \frac{1}{2} - \ell_{\delta} \delta.$$
\begin{lemma}\label{lem:graphon_lemma}
    Let $0<\ve<1$ and $\calW = \{W_1, \dots , W_n\}$ be a $(1-\ve)$-full finite separating system bounded above by $\delta > 0$ where each $W_i$ is $(A_i, B_i)$-bipartite. Then we have
    \begin{equation*}
        \sum_{i=1}^n m(A_i \cup B_i) \geq \log \frac{1}{\ve} + \int_{[0, 1]} \log \left(\sum_{j=0}^{\ld} \left(\frac{1}{2} + \td - \delta j \right) \binom{N_{\calW}(x)}{j}\right) dx.
    \end{equation*}
\end{lemma}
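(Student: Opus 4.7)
The plan is to generalize Radhakrishnan's random-string / collision-probability proof of Hansel's lemma to the graphon setting, with a refinement tracking the cap $\|W_i\|_\infty \le \delta$ via a combinatorial weight depending on $\nw(x)$.

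First, I would attach to each point $x \in [0,1]$ a random ``code'' $s_x$ whose effective length is $\nw(x)$: for each $i$ with $x \in A_i \cup B_i$, place a deterministic ``signal bit'' recording which side of $W_i$ contains $x$, and fill in the remaining positions with uniform independent ``noise bits.'' The key property, to be verified from the bipartite structure and $(1-\ve)$-fullness, is that the collision event $\{s_X = s_Y\}$ forces $(X,Y)$ to lie in a region essentially contained in $\{\calW^\Sigma < 1\}$, so that $\Pr[s_X = s_Y]$ is controlled by $\ve$ (up to the $\delta$-cap).

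Second, by the R\'{e}nyi-to-Shannon inequality $H(s_X) \ge -\log \Pr[s_X = s_Y]$, this yields $H(s_X) \ge \log(1/\ve)$ (plus a $\delta$-correction). The chain-rule decomposition $H(s_X) = H(s_X \mid X) + I(X;s_X)$ splits the entropy into noise entropy and signal entropy; the noise entropy is explicit (one bit per non-signal position), so the problem reduces to an upper bound on the signal part. In the $\delta=1$ case, each signal bit contributes at most one bit of information, yielding $I(X;s_X) \le \int \nw(x)\, dx$, which recovers the unweighted bound (where $g \equiv 1$).

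For general $\delta$, I would sharpen the signal-part bound by exploiting the cap $\|W_i\|_\infty \le \delta$. Intuitively, in order for $\calW^\Sigma(x,y)$ to saturate at $1$, at least $\lceil 1/\delta \rceil \ge 2\ld$ of the graphons $W_i$ containing $x$ must contribute at $y$; thus a portion of the signal bits carry redundant information, saving a factor of $g(\nw(x)) \defeq \sum_{j=0}^{\ld}\bigl(\tfrac12+\td-\delta j\bigr)\binom{\nw(x)}{j}$ in the entropy count. Integrating over $x$ produces the correction term $\int \log g(\nw(x))\, dx$ on the right-hand side.

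The main obstacle is matching the combinatorial factor $g(\nw(x))$ with an entropy saving. The truncation at $j = \ld$ (where $2\ld \delta \le 1$) and the weights $1 - \delta(\ld + j) = \tfrac12 + \td - \delta j$ strongly hint at a random-subset argument on the set of graphons containing $x$: for each subset $S$ of size $\ld + j$, the cap $\delta$ limits the slack left in $1 - \sum_{i \in S} W_i(x,y)$, yielding precisely the coefficient $1 - \delta(\ld + j)$. Executing this decomposition and matching it to the collision-probability estimate is the crux of the argument.
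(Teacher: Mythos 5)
You correctly identify Radhakrishnan's random-partition device (choose a uniform side $C_i \in \{A_i,B_i\}$ for each graphon) as the right starting point, and your $\delta = 1$ sketch would recover \Cref{thm:main_noweight}. But the generalization to $\delta < 1$, which is the entire content of the lemma, is precisely the step you flag as ``the main obstacle'' and do not supply, and the heuristic you offer for it doesn't point at a working mechanism.

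Two concrete problems. First, your intuition that ``in order for $\calW^\Sigma(x,y)$ to saturate at $1$, at least $\lceil 1/\delta\rceil$ of the graphons containing $x$ must contribute at $y$'' is about the total number of graphons separating $(x,y)$, whereas the coefficient $\tfrac12 + \td - \delta j$ in the target inequality indexes $j$ by the \emph{random} quantity $|\{i : x \in C_i\}|$; the proposed ``random-subset argument on the set of graphons containing $x$'' conflates these two counts and gives no way to connect a per-$x$ binomial average with a per-pair $(x,y)$ collision event. Second, the $\delta$-correction does not in fact come out of an entropy bookkeeping at all. The paper's proof is not an entropy argument: it sets $\gamma(x) = \tfrac12 - \delta\,|\{i : x\in C_i\}|$, uses the deterministic fact that every separating index contributes $x \in C_i$ or $y\in C_i$ to get $|{\rm sep}(x,y)| \le |\{i : x\in C_i\}| + |\{j : y\in C_j\}|$, hence $\calW^\Sigma(x,y)\le 1-\gamma(x)-\gamma(y)$ on the set $E=\{\gamma \ge 0\}$, and then derives the integral inequality
\[
\int_{E} \frac{\gamma(x) + \td}{1 - d_{\calW^{\Sigma}}(x)}\,dx \le 1
\]
by integrating over $y \in E$. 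Taking expectation over the random $C_i$'s turns $\gamma^*(x)$ into the binomial sum $\sum_{j=0}^{\ld}(\tfrac12+\td-\delta j)\binom{\nw(x)}{j}2^{-\nw(x)}$, and Jensen then produces the stated bound. None of this has a counterpart in your chain-rule/noise-bit decomposition; the place where you would need an ``entropy saving'' matching $g(\nw(x))$ is exactly where you stop. To fix the proposal you would need to either reproduce this integral inequality (abandoning the entropy framing) or supply an actual entropy-subadditivity step that yields the binomial factor, and as written neither is present.
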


\begin{proof}
    For each $i\in [n]$, we pick $C_i\in \{A_i,B_i\}$ independently and uniformly at random.
    Let $\mathcal{C}=\{C_1,\dots, C_n\}$ be the collection of chosen sets. 
    We then assign for each $x\in [0,1]$, 
    $$\gamma(x)\defeq \frac{1}{2} - \delta|\{i\in [n]: x\in C_i \}|.$$ 
    Note that $\gamma$ is a random variable depending on the choice of $\mathcal{C}$.
    For every $(x, y)\in [0, 1]^2$, we denote 
     $${\rm sep}(x, y) \defeq \{i\in [n]: (x, y)\in (A_i\times B_i)\cup (B_i\times A_i)\}.$$ 
     For all $(x,y)\in [0,1]^2$ and each $i\in {\rm sep}(x,y)$, we have either $x\in C_i$ or $y\in C_i$, meaning that 
    $$|{\rm sep}(x, y)|\leq \left(|\{i\in [n]: x\in C_i \}| + |\{j\in [n]: y\in C_j \}|\right).$$

   Now, we consider a set $E\subseteq [0, 1]$ where $E = \{x\in [0, 1]: \gamma(x) \geq 0\}$, which is clearly a measurable set. 
As $\calW$ is bounded above by $\delta$, for almost all $(x,y)\in E\times E$,
we have 
$$\calW^{\Sigma}(x, y) \leq \delta |{\rm sep}(x, y)| \leq 1- \gamma(x)-\gamma(y).$$
Hence, for almost all $x\in E$, we have
$$\int_E \calW^{\Sigma}(x, y)dy \leq \int_{E} \delta |{\rm sep}(x, y)| dy
\leq m(E) - \gamma(x)m(E) - \int_E \gamma(y)dy \leq m(E) - (\gamma(x) + \td)m(E).$$
 The final inequality holds as $y\in E$ implies $w(y)\geq \tau_{\delta} = \frac{1}{2} -\lfloor \frac{1}{2\delta}\rfloor\delta$.
This implies that almost all $x\in E$ satisfies
\begin{align}\label{eq: 1}
    d_{\calW^{\Sigma}}(x) \leq \int_E \calW^{\Sigma}(x, y)dy + \int_{[0, 1]\setminus E} 1~ dy \leq 1 - (\gamma(x) + \td)m(E).
\end{align} 
We claim that we also have 
\begin{equation}\label{eqt:integral}
        \int_{E} \frac{\gamma(x) + \td}{1 - d_{\calW^{\Sigma}}(x)} dx \leq 1.
    \end{equation}
Indeed, if $m(E) = 0$, then $\int_{E} \frac{\gamma(x) + \td}{1 - d_{\calW^{\Sigma}}(x)} dx = 0 \leq 1$. If $m(E) > 0$, then \eqref{eq: 1} implies $\int_{E} \frac{\gamma(x) + \td}{1 - d_{\calW^{\Sigma}}(x)} dx \leq \int_{E} \frac{1}{m(E)} = 1$. Thus \eqref{eqt:integral} holds.

  Let 
  
  $$\gamma^*(x) \defeq \begin{cases}
        \gamma(x) + \td & \text{ if } {\gamma(x) \geq 0}\\
        0 & \text{ if } {\gamma(x) < 0}.
    \end{cases}$$
    By \Cref{eqt:integral}, 
    $$\int_{[0, 1]} \frac{\gamma^*(x)}{1 - d_{\calW^{\Sigma}}(x)} dx \leq 1.$$

For a fixed choice of $x$, let $\mathbb{E}[\gamma^*(x)]$ be the expectation of $\gamma^*(x)$ over the choice of $\mathcal{C}$. 
    As there are finitely many bipartite graphons in $\mathcal{W}$, the linearity of expectation together with the above displayed equation yields the following.
    $$\int_{[0, 1]}  \frac{\mathbb{E}[\gamma^*(x)]}{1 - d_{\calW^{\Sigma}}(x)} dx \leq 1.$$
    
For each $x\in [0, 1]$, we can compute $\mathbb{E}[\gamma^*(x)]$ as follows. 
$$\mathbb{E}[\gamma^*(x)] = \sum_{j = 0}^{\ld} \left(\frac{1}{2} + \td - \delta j \right) \binom{N(x)}{j} 2^{-N(x)}.$$
Thus we have 
$$\int_{[0, 1]}  \frac{1}{1 - d_{\calW^{\Sigma}}(x)} \sum_{j = 0}^{\ld} \left(\frac{1}{2} + \td - \delta j \right) \binom{N(x)}{j} 2^{-N(x)} dx \leq 1. $$
By taking logarithms on both sides and applying Jensen's inequality, this yields the following.
$$\int_{[0, 1]} -\log(1-d_{\calW^{\Sigma}}(x)) dx + \int_{[0, 1]} -N_{\calW}(x)dx + \int_{[0, 1]} \log \left(\sum_{j = 0}^{\ld} \left(\frac{1}{2} + \td - \delta \right) \binom{N_{\calW}(x)}{j} \right)dx \leq 0.$$
    By \Cref{lem:double_counting}, we have $\int_{[0, 1]} N_{\calW}(x)dx = \sum_{i=1}^k m(A_i \cup B_i)$ and Jensen's inequality yields $\int_{[0, 1]}-\log (1 - d_{\calW^{\Sigma}}(x)) dx \geq -\log \left(\int_{[0, 1]} 1 - d_{\calW}(x) dx \right) \geq \log \frac{1}{\ve}$. Therefore, we obtain the following desired inequality.
    $$\sum_{i=1}^n m(A_i \cup B_i) \geq \log \frac{1}{\ve} + \int_{[0, 1]} \log \left(\sum_{j=0}^{\ld} \left(\frac{1}{2} + \td -\delta j\right) \binom{N_{\calW}(x)}{j} \right)dx.$$
\end{proof}

We are now ready to prove our main theorems. We first prove \Cref{thm:main_noweight}.
\begin{proof}[Proof of \Cref{thm:main_noweight}]
    Let $\calW$ be a separating system where each $W_i$ is $(A_i,B_i)$-bipartite. If $\calW$ is finite, we can add zero graphons $W_i$ (which is $(\emptyset, \emptyset)$-bipartite) for all $i>|\mathcal{W}|$ to assume $|\calW|=\infty$.
    For each $n\in \mathbb{N}$, denote $\calW_n \defeq \{W_1, \dots, W_n\}$ and assume that $\calW_n$ is $(1-\ve_n)$-full bipartite covering. Then we have $\lim_{n\to \infty} \ve_n \leq \ve$. 
    For $\delta=1$, we have $\ell_{\delta}=0$ and $\tau_{\delta}=\frac{1}{2}$. Hence 
    \Cref{lem:graphon_lemma} implies that for each $n\in \mathbb{N}$, we have $$\sum_{i=1}^n m(A_i \cup B_i) \geq \log \frac{1}{\ve_n} + \int_{[0, 1]} \log \left(\sum_{j=0}^{0} \left(\frac{1}{2} + \frac{1}{2} -\delta j\right) \binom{N_{\calW_n}(x)}{j} \right)dx =  \log \frac{1}{\ve_n}.$$ 
Therefore, we conclude $\sum_{i=1}^{\infty} m(A_i\cup B_i) \geq \lim_{n\to \infty} \log \frac{1}{\ve_n} \geq \log \frac{1}{\ve}$.
\end{proof}

Finally, we proceed to the proof of \Cref{thm:main_weight}.
\begin{proof}[Proof of \Cref{thm:main_weight}]
By adding zero graphons if needed, assume that $|\mathcal{W}|=\infty$. Let $\calW = \{W_1, W_2, \dots \}$ and for each $i\in \mathbb{N}$, let $W_i$ is $(A_i, B_i)$-bipartite.
As $\mathcal{W}$ is a $(1-\ve)$-full covering, $\int_{[0,1]^2} \mathcal{W}^{\Sigma}(x,y) dxdy \geq 1-\ve$. As each $W_i\in \mathcal{W}$ is $(A_i, B_i)$-bipartite and bounded above by $\delta$, Mantel's theorem yields that $\int_{[0,1]^{2}} W_i(x,y)dx \leq \frac{\delta}{2}m(A_i\cup B_i)^2$.
Then we have 
$$1-\ve \leq \int_{[0,1]^2} \mathcal{W}^{\Sigma}(x,y) dxdy 
\leq \sum_{i = 1}^{\infty} \frac{\delta}{2}m(A_i\cup B_i)^2 \leq \frac{\delta}{2} \sum_{i = 1}^{\infty} m(A_i\cup B_i).$$
This provides $\sum_{i = 1}^{\infty} m(A_i\cup B_i) \geq \frac{2(1-\ve)}{\delta}$.

Now it suffices to prove that $A=\log \frac{1}{\ve} + \lfloor \frac{1-\delta}{2\delta} \rfloor \log\left( \delta \log \frac{1}{\ve} \right) - \frac{1}{\delta} - 1$ is the lower bound on $\sum_{i=1}^{\infty} m(A_i\cup B_i)$.

If $\frac{1}{\delta} > \frac{1}{2} \log \frac{1}{\ve}$, then $\frac{2(1-\ve)}{\delta}$ is bigger than $A$. Hence we assume $\frac{1}{\delta} \leq \frac{1}{2} \log \frac{1}{\ve}$.
Assume that we have an an ordering $W_1,W_2,\dots$ of the graphons in $\mathcal{W}$ and assume that a graphon $W_i$ is $(A_i, B_i)$-bipartite for each $i\in \mathbb{N}$. 
In addition, we may assume that each $W_i$ is a step graph
$$W_i = \delta \cdot \mathbf{1}_{(A_i\times B_i) \cup (B_i\times A_i)}.$$
For each $n\in \mathbb{N}$, we denote $\calW_n \defeq \{W_1, \dots, W_n\}$. Let $\int_{[0, 1]^2} \calW_n^s dX = 1-\ve_n$, then $\lim_{n\to \infty} \ve_n \leq \ve$. 
Hence we can take an integer $N$ such that $\ve_n<1$ 

Fix an integer $n>N$. 
Let $E=\{ x\in [0,1]: N_{\calW_n}(x) < \frac{1}{2}\log\frac{1}{\ve_n} \}$. 
Note that $E$ can be written as a finite union of finite intersections of the complements of $(A_i\cup B_i)$, which are measurable sets. Hence, $E$ is measurable.
We now show that the measure $m(E)$ of the set $E$ is small.

\begin{claim}\label{cl:1}
$m(E) < \ve_n^{\frac{1}{5}}$.
\end{claim}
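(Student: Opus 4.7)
My plan is to leverage the intermediate integral inequality
\begin{equation*}
\int_{[0,1]}\frac{\mathbb{E}[\gamma^*(x)]}{1-d_{\calW_n^{\Sigma}}(x)}\,dx\;\le\; 1
\end{equation*}
that is established inside the proof of \Cref{lem:graphon_lemma}, combined with a Cauchy--Schwarz estimate restricted to $E$. The point is that for $x\in E$ the quantity $\mathbb{E}[\gamma^*(x)]$ admits a clean lower bound purely in terms of $\ve_n$, and pairing this with the reciprocal $1/(1-d_{\calW_n^{\Sigma}}(x))$ through Cauchy--Schwarz converts the integral bound into an upper bound on $m(E)$.

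The first step is to re-derive the displayed integral inequality for $\calW_n$; since $\calW_n$ is a finite separating system bounded above by $\delta$, the argument in \Cref{lem:graphon_lemma} applies verbatim and yields
\begin{equation*}
\mathbb{E}[\gamma^*(x)] \;=\; 2^{-N_{\calW_n}(x)}\sum_{j=0}^{\ld}\Big(\tfrac12+\td-\delta j\Big)\binom{N_{\calW_n}(x)}{j}.
\end{equation*}
All summands are nonnegative, so retaining only the $j=0$ term gives $\mathbb{E}[\gamma^*(x)]\ge (\tfrac12+\td)\,2^{-N_{\calW_n}(x)}\ge \tfrac12\cdot 2^{-N_{\calW_n}(x)}$. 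For $x\in E$, the defining inequality $N_{\calW_n}(x)<\tfrac12\log\tfrac1{\ve_n}$ gives $2^{-N_{\calW_n}(x)}>\ve_n^{1/2}$, and hence $\mathbb{E}[\gamma^*(x)]>\tfrac12\ve_n^{1/2}$ uniformly on $E$. Restricting the integral inequality to $E$ therefore produces
\begin{equation*}
\int_E\frac{dx}{1-d_{\calW_n^{\Sigma}}(x)}\;\le\;2\ve_n^{-1/2}.
\end{equation*}

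A Cauchy--Schwarz estimate finishes the job:
\begin{equation*}
m(E)^2 \;\le\; \Big(\int_E(1-d_{\calW_n^{\Sigma}}(x))\,dx\Big)\Big(\int_E\frac{dx}{1-d_{\calW_n^{\Sigma}}(x)}\Big)\;\le\;\ve_n\cdot 2\ve_n^{-1/2}\;=\;2\ve_n^{1/2},
\end{equation*}
using $\int_{[0,1]}(1-d_{\calW_n^{\Sigma}}(x))\,dx=\ve_n$. Thus $m(E)\le \sqrt 2\,\ve_n^{1/4}$, which is stronger than the claimed $\ve_n^{1/5}$ once $\ve_n$ is small enough, a condition ensured by taking $n$ large. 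The main obstacle I anticipate is the constant $\sqrt 2$: the bound $\sqrt 2\,\ve_n^{1/4}<\ve_n^{1/5}$ forces $\ve_n<2^{-10}$, so to obtain the stated exponent $\tfrac15$ uniformly across the range $\ve_n<1$ it is probably necessary either to sharpen the lower bound on $\mathbb{E}[\gamma^*(x)]$ by retaining the polynomially growing terms $\binom{N_{\calW_n}(x)}{j}$ for $1\le j\le\ld$, or to patch in a direct Markov-type argument on $\int_y(1-\calW_n^{\Sigma}(x,y))\,dy$ in the regime where $\delta\log\tfrac1{\ve_n}$ is bounded.
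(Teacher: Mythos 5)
Your approach is genuinely different from the paper's. The paper proves the claim by contradiction: it trims $E$ to a subset of measure exactly $\ve_n^{1/5}$, rescales it (via a measure-preserving map through an open cover) to obtain a new separating system $\mathcal U$ on $[0,1]^2$ that is $(1-\ve_n^{11/20})$-full, applies the already-proved \Cref{thm:main_noweight} as a black box to get $\sum m(A_i'\cup B_i')\ge \frac{11}{20}\log\frac1{\ve_n}$, and then contradicts this with the direct count $\sum m(A_i'\cup B_i')=\eta^{-1}\int_E N_{\calW_n}\,dx\le\frac12\log\frac1{\ve_n}$. You instead re-open the proof of \Cref{lem:graphon_lemma}, extract the intermediate inequality $\int\frac{\mathbb E[\gamma^*]}{1-d_{\calW_n^\Sigma}}\le1$, lower-bound $\mathbb E[\gamma^*(x)]$ on $E$ by the $j=0$ term, and apply Cauchy--Schwarz. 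All of those manipulations are valid (in particular $\frac12+\td\ge\frac12$ since $\td\ge0$, the restriction to $E$ is legitimate since the integrand is nonnegative, and the finiteness of the restricted integral forces $1-d_{\calW_n^\Sigma}>0$ a.e.\ on $E$), and they do yield $m(E)\le\sqrt2\,\ve_n^{1/4}$.

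But this is not enough to establish the claim as stated, and your proposed remedy of ``taking $n$ large'' is exactly where the argument breaks. The inequality $\sqrt2\,\ve_n^{1/4}<\ve_n^{1/5}$ requires $\ve_n<2^{-10}$, whereas the proof of \Cref{thm:main_weight} only arranges $n>N$ so that $\ve_n<1$; since $\ve_n\to\lim_n\ve_n\le\ve$ and $\ve\in(0,1)$ is a fixed input to the theorem (possibly $\ve=0.99$), no amount of enlarging $n$ forces $\ve_n$ below $2^{-10}$. So the regime $2^{-10}\le\ve_n<1$ is genuinely unaddressed. Nor does your fallback suggestion of keeping more terms of $\binom{N_{\calW_n}(x)}{j}$ automatically help: on $E$ the quantity $N_{\calW_n}(x)$ can be as small as $0$, in which case all terms with $j\ge1$ vanish and the $j=0$ term is the true value of $\mathbb E[\gamma^*]$. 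Closing the gap would require a separate device for $\ve_n$ bounded away from $0$ --- for instance the paper's restrict-and-rescale argument is insensitive to the constant because it compares the exponents $\tfrac{11}{20}$ and $\tfrac12$ directly rather than passing through a nonsharp Cauchy--Schwarz step.
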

\begin{claimproof}
    By taking a subset if necessary, we assume $m(E) = \ve_n^{\frac{1}{5}}$. 
    Let $\ve'$ be a positive number satisfying $\ve' < \ve_n^c$ where $c$ is a positive integer where $\ve_n ^{\frac{1}{20}} + 4\ve_n^{c - \frac{3}{4}} < 1$ holds. Since $\ve_n < 1$ and $\ve_n^x$ goes to zero as $x$ goes to $\infty$, such an integer $c$ exists.
    
    As $E$ is measurable, we can find an open set $O\subseteq [0,1]$ containing $E$ with $m(O) = \ve_n^{1/5}+ \ve'$. Let $\eta = m(O)$, then we have $\eta<1$. As $O$ is an open set in $\mathbb{R}^1$, $O$ is a countable union of open intervals, hence there exists a natural measure-preserving bijection $\psi$ from $O$ to  $[0,\eta]\setminus B$ for some countable set $B$. (For example, map each of the countable intervals into the intervals $(0,x_1), (x_1,x_2),\dots$ in a way that the measure is preserved.)

    Observe that $$\int_{O^2} \calW_n^{\Sigma} dxdy + m( [0,1]^2- O\times O ) \geq \int_{[0,1]^2} \calW_n^{\Sigma} dxdy \geq 1-\ve_n,$$ 
    hence we have $\int_{O^2} \calW_n^{\Sigma} dxdy \geq \eta^2 - \ve_n$.
    Since 
    \begin{align*}
        \int_{E^2} \calW^{\Sigma}_n dxdy &= \int_{O^2} \calW^{\Sigma}_n dxdy - \int_{O^2 \setminus E^2} \calW^{\Sigma}_n dxdy \geq \int_{O^2} \calW^{\Sigma}_n dxdy - \int_{O^2 \setminus E^2} 1 dxdy\\ &\geq \int_{O^2} \calW^{\Sigma}_n dxdy - 2m(O\setminus E) m(E)- m(O\setminus E)^2
        =\int_{O^2} \calW^{\Sigma}_n dxdy - 2\ve' m(E)- \ve'^2 ,
    \end{align*} we have $\int_{E^2} \calW^{\Sigma}_n dxdy \geq \eta^2 - 2\ve' \ve_n^{\frac{1}{5}} - \ve'^2 - \ve_n$.

    For each $i\in [n]$, consider a new graphon $U_i$ such that 
    $$ U_i(x,y) = \left\{\begin{array}{ll}
        0 & \text{ if } \eta \cdot x \notin \psi(E) \text { or } \eta \cdot y\notin \psi(E) \\
        W_i(\psi^{-1}(\eta x), \psi^{-1}(\eta y))  & \text{ otherwise }
    \end{array}\right.$$
    In other words, $U_i$ is obtained from $W_i$ by moving its values on $O\times O$ into $[0,\eta]^2$ and dilate it into $[0,1]^2$. Let $\mathcal{U} = \{U_1,\dots, U_n\}$ and for each $i\in [n]$, let
    $$A'_i= \eta^{-1} \psi(E\cap A_i) \text{ and } B'_i = \eta^{-1}\psi(E\cap B_i),$$
    where the multiplication by $\eta^{-1}$ is the dilation of $[0,\eta]$ to $[0,1]$. Then each $U_i$ is $(A'_i,B'_i)$-bipartite.
Then we naturally have 
$\int_{[0,1]} U_i(x,y) dxdy = \eta^{-2} \int_{E^2} W_n(x,y) dxdy$, hence we have 
$$\int_{[0,1]} \mathcal{U}^{\Sigma} dxdy \geq \eta^{-2}(\eta^2 - 2\ve'\ve_n^{\frac{1}{5}} - \ve'^2 - \ve_n) \geq  1 - 3\ve_n^{c - \frac{1}{5}} - \ve_n^{\frac{3}{5}} \geq 1 - \ve_n^{\frac{11}{20}}.$$
Here, the final inequality holds by our choice of $c$.
Then  \Cref{thm:main_noweight} implies that
\begin{align}\label{eq: cl1}
    \sum_{i\in [n]} m(A'_i\cup B'_i) \geq  \log \ve_n^{-11/20} = \frac{11}{20} \log\frac{1}{\ve_n}.
\end{align}
However, as $N_{\mathcal{W}}(x)<\frac{1}{2}\log \frac{1}{\ve_n}$ for all $x\in E$, we have the following inequalities.
\begin{align*}
\sum_{i\in [n]} m(A'_i\cup B'_i) &= \int_{[0,1]} N_{\mathcal{U}}(x) dx 
 = \eta^{-1} \int_{E} N_{\mathcal{W}_n}(x) dx \leq \eta^{-1} m(E) \cdot \frac{1}{2}\log \frac{1}{\ve_n}
\leq \frac{1}{2} \log \frac{1}{\ve_n}.
\end{align*}
This contradicts \eqref{eq: cl1}. Hence we have $m(E) < \ve_n^{1/5}$ and this finishes the proof.
\end{claimproof}
    
Note that  \Cref{lem:graphon_lemma} implies the following inequality. 
$$\sum_{i=1}^n m(A_i \cup B_i) \geq \log \frac{1}{\ve_n} + \int_{[0, 1]} \log \left(\sum_{j=0}^{\ld} \left(\frac{1}{2} + \td -\delta j\right) \binom{N_{\calW_n}(x)}{j} \right)dx.$$
By the definition of $E$, we have bounds on the value of $N_{\mathcal{W}_n}(x)$ for all $x\notin E$.
Hence, 
    \begin{equation}\label{equ:...}
        \begin{split}
            \sum_{i=1}^n m(A_i \cup B_i) & \geq \log\frac{1}{\ve_n} + \int_{[0, 1]} \log \left(\sum_{j=0}^{\ld} \left(\frac{1}{2} + \td - \delta j \right) \binom{N_{\calW_n}(x)}{j}\right) dx\\
            & \geq \log\frac{1}{\ve_n} + \int_E \log \left(\frac{1}{2} + \td \right)dx + \int_{[0, 1]\setminus E} \log \left(\sum_{j=0}^{\ld} \left(\frac{1}{2} + \td - \delta j \right) \binom{\frac{1}{2}\log\frac{1}{\ve_n}}{j}\right) dx
        \end{split}
    \end{equation}
Note that the above term $\frac{1}{2} + \td - \delta j$ is at least $\frac{1}{2}\delta$ for $j\leq j^*=\lfloor \frac{1-\delta}{2\delta} \rfloor$. Hence we have 
\begin{align*}
\log \sum_{j=0}^{\ld} \left(\frac{1}{2} + \td - \delta j \right) \binom{\frac{1}{2}\log\frac{1}{\ve_n}}{j} &\geq \log \left(\frac{\delta}{2} \binom{\frac{1}{2}\log \frac{1}{\ve_n}}{j^*}\right) \geq \log \left(\frac{\delta}{2} \left( \frac{\log\frac{1}{\ve_n}}{2j^*} \right)^{j^*}\right) \\ &\geq \log\delta -1 + j^*(\log\log \frac{1}{\ve_n}  - \log (2j^*) ).
\end{align*}
Thus, \Cref{equ:...} and Claim~\ref{cl:1} yield the following.
 \begin{align*}\label{equ:...-middle}        
 \sum_{i = 1}^n m(A_i\cup B_i) &\geq \log\frac{1}{\ve_n} - m(E) + (1-m(E))( \log\delta -1 + j^*(\log\log \frac{1}{\ve_n}  - \log (2j^*) )) \\
        & \geq \log\frac{1}{\ve_n} -\ve_n^{1/5} + (1- \ve_{n}^{1/5} )( \log\delta -1 + j^*(\log\log \frac{1}{\ve_n}  - \log (2j^*) )) \\
        & \geq \log\frac{1}{\ve_n} + \lfloor \frac{1-\delta}{2\delta} \rfloor \log\log\frac{1}{\ve_n} - \lfloor \frac{1 - \delta}{2\delta}\rfloor \log{\frac{1}{\delta}} - \frac{1}{\delta} -1.
        \end{align*}
Note that since $0 < \ve_n \leq 1$, we have $\ve_n^{1/5} \log\log\frac{1}{\ve_n} \leq 2$, so the final inequality holds as all other negative terms are lower bounded by the last term $-\frac{1}{\delta}-1$.
As $\lim_{n\rightarrow \infty} \ve_n= n$, we take a limit on the above lower bound, then we obtain
$$\sum_{i=1}^{\infty} m(A_i\cup B_i) \geq \log\frac{1}{\ve} + \lfloor \frac{1-\delta}{2\delta} \rfloor \log \log \frac{1}{\ve} - \lfloor \frac{1 - \delta}{2\delta} \rfloor \log{\frac{1}{\delta}} - \frac{1}{\delta} -1 = \log\frac{1}{\ve} + \lfloor \frac{1-\delta}{2\delta} \rfloor \log\left(\delta \log\frac{1}{\ve} \right) - \frac{1}{\delta} -1.$$
This completes the proof.    
\end{proof}
\section{Applications}\label{sec:geometry}

We now present an application of \Cref{thm:main_noweight}.
Let $T$ be the right triangle with three sides of lengths $1, 1, \sqrt{2}$. Does there exist a countable collection of squares covering almost all points of $T$ such that (1) each square is completely contained in $T$ and (2) the sum of the lengths of the squares is finite? As we see in \Cref{obs:triangle_square_finite}, such a collection exists. 
For a given square $S$ in $\mathbb{R}^2$, let $\ell(S)$ be the length of the sides of $S$.

\begin{observation} \label{obs:triangle_square_finite}
    Let $T$ be a right triangle on the plane with sides having lengths $1, 1, \sqrt{2}$. Then there exists a countable collection $\calS$ of squares which covers almost all points of $T$ satisfying the following.
    \begin{itemize}
        \item Each $S\in \calS$ is completely contained in $T$, and
        \item $\sum_{S\in \calS} \ell(S) = 7 + 5\sqrt{2}$.
    \end{itemize}
\end{observation}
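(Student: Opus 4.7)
My plan is to prove the observation by giving an explicit construction of $\calS$. Fix coordinates so that $T$ has vertices $(0,0)$, $(1,0)$, $(0,1)$, i.e.\ with the two legs along the coordinate axes and the hypotenuse on the line $x+y=1$.

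The construction will proceed inductively. At stage $k \geq 0$, I would maintain a finite collection $\calT_k$ of pairwise disjoint open triangles inside $T$, each similar to $T$, together with a finite family of squares $\calS_0 \cup \dots \cup \calS_k$ whose interiors partition $T \setminus \bigcup \calT_k$ up to a null set. To pass to stage $k+1$, I would inscribe finitely many squares inside each $T' \in \calT_k$; this produces $\calS_{k+1}$ and a refined collection $\calT_{k+1}$ of strictly smaller similar residual triangles. Setting $\calS = \bigcup_{k \geq 0} \calS_k$ yields a countable family, and once the area of $\bigcup \calT_k$ is shown to tend to $0$, $\calS$ covers almost all of $T$.

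The key quantitative aim is to design the per-stage inscribed configuration so that (i) the total leg-length $\sum_{T' \in \calT_{k+1}} \mathrm{leg}(T')$ equals $r \cdot \sum_{T' \in \calT_k} \mathrm{leg}(T')$ for a common ratio $r \in (0,1)$, making the recursion contract, and (ii) the total side-length added at stage $k$ is a fixed constant $c$ times $\sum_{T' \in \calT_k} \mathrm{leg}(T')$. Summing the resulting geometric series then yields $\sum_{S \in \calS} \ell(S) = c/(1-r)$. The identity $7 + 5\sqrt{2} = (1+\sqrt{2})^3$, together with $1/(1-(2-\sqrt{2})) = 1+\sqrt{2}$, strongly suggests $r = 2 - \sqrt{2}$ and a decomposition that produces three independent geometric contributions, each summing to $1+\sqrt{2}$, whose product is the target $(1+\sqrt 2)^3$.

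The main technical obstacle is the explicit geometric design of the inscribed squares at each stage. A natural candidate is the rotated square inscribed in a right isoceles triangle with one vertex on each side, which has side $\sqrt{2}/3$ times the triangle's leg and splits the triangle into three similar subtriangles of scale factors $1/3, \sqrt{2}/3, \sqrt{2}/3$; since these scale factors sum to $(1+2\sqrt{2})/3 > 1$, this single inscribed square alone does not give a contracting recursion, so one must augment it with additional axis-aligned corner squares so that the residual leg-length sums contract by exactly $2 - \sqrt{2}$ per stage. Once this per-stage construction is in place, verifying that $\calS$ covers almost all of $T$ is routine, since the total area of $\bigcup\calT_k$ is bounded by $\tfrac12 \cdot r^{2k} \to 0$, and the final identity $\sum_{S}\ell(S) = (1+\sqrt{2})^3 = 7+5\sqrt{2}$ follows by summing the resulting geometric series.
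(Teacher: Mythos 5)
Your general framework — iterate a per-stage inscription of squares inside residual similar triangles and sum a geometric series — matches the paper's. You also correctly identify the single most relevant square: the tilted square with one side on the hypotenuse, which splits a right isoceles triangle of leg~$\ell$ into three similar subtriangles of legs $\tfrac13\ell$, $\tfrac{\sqrt2}{3}\ell$, $\tfrac{\sqrt2}{3}\ell$, and you correctly observe that the sum $\tfrac{1+2\sqrt2}{3}>1$ means this one square alone gives no contraction. However, the construction itself is not supplied, and this is precisely where the substance of the observation lies. The paper's fix is essentially trivial once seen: together with the tilted square, place the single axis-aligned square of side $\tfrac13\ell$ with the right-angle vertex as a corner. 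That square \emph{entirely covers} the $\tfrac13\ell$ residual triangle at the right angle (a right isoceles triangle with its legs on two sides of a square lies inside that square), so the residual at the next stage is only the two $\tfrac{\sqrt2}{3}\ell$ triangles. The two squares overlap, which is fine: the statement asks for a cover, not a packing. This gives per-triangle added side length $c=\tfrac{\sqrt2+1}{3}\ell$ and residual leg-sum ratio $r=\tfrac{2\sqrt2}{3}$, hence $\sum_S \ell(S)=\tfrac{c}{1-r}=\tfrac{\sqrt2+1}{3-2\sqrt2}=(\sqrt2+1)(3+2\sqrt2)=7+5\sqrt2$.

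Two things in your write-up would actively mislead you if pursued. First, you stipulate that the squares' interiors partition $T\setminus\bigcup\calT_k$ up to a null set; the paper's two squares per stage overlap, so insisting on a partition rules out the clean construction. Second, the numerical hint is wrong: from the factorization $7+5\sqrt2=(1+\sqrt2)^3$ and $\tfrac{1}{1-(2-\sqrt2)}=1+\sqrt2$ you infer $r=2-\sqrt2$ and ``three independent geometric contributions whose product is the target,'' but the total is a single scalar sum $\tfrac{c}{1-r}$, not a product, and many pairs $(c,r)$ produce the same value; the pair $(2\sqrt2+3,\,2-\sqrt2)$ you are implicitly aiming for does not correspond to any reasonable per-triangle inscription (you would need total side length $\approx 5.83$ per unit triangle), whereas the correct pair is $\bigl(\tfrac{\sqrt2+1}{3},\,\tfrac{2\sqrt2}{3}\bigr)$. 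So the proposal is a correct outline of the strategy but leaves the decisive geometric step undone and points toward a dead end for finding it.
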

\begin{proof}
    Let $A, B, C$ be the three vertices of $T$ where $\angle ABC = \frac{\pi}{2}$. Let $D$, $E$ be trisection points of $AC$. Consider the square $S_1$ which has $DE$ with one side and is completely contained in $T$. Then we have $\ell(S_1) = \frac{\sqrt{2}}{3}$. Now consider a square $S'_1$ with side length $\frac{1}{3}$ and $B$ is a vertex of $S'_1$ and $S'$ is completely contained in $T$. 
   
   As $T\setminus (S_1\cup S'_1)$ is a disjoint union of two right angle triangles where each triangle is similar with $T$ with the ratio $\frac{\sqrt{2}}{3} : 1$. For each smaller triangle, iterate this process. Then we obtain a countable collection $\calS$ of squares covering almost all of $T$. Moreover 
   $$\sum_{S\in \calS} \ell(S) = \sum_{n\in \mathbb{N}} \left(\frac{2\sqrt{2}}{3}\right)^{n-1} \frac{\sqrt{2} + 1}{3} = 7 + 5\sqrt{2}.$$
   This finishes the proof.
\end{proof}

The collection presented in the proof of \Cref{obs:triangle_square_finite} contains a square whose sides are not parallel to the sides of $T$ having length $1$.
What if we only want to use squares whose sides are parallel to those two sides of $T$? We can prove \Cref{thm:cover_triangle_square} that states that such a collection does not exist. 

\begin{theorem} \label{thm:cover_triangle_square}
    Let $T \defeq \{(x, y)\in \mathbb{R}^2 : 0\leq x\leq 1, 0\leq  y \leq x\}$. Let $\calS$ be the countable collection of squares $S$ whose sides are parallel to the axes and $S\subset T$. If $m(\bigcup_{S\in \calS} S) \geq \frac{1-\ve}{2}$, then $\sum_{S\in \calS} \ell(S) \geq \frac{1}{2}\log \frac{1}{\ve}$. In particular, if $\bigcup_{S\in \calS} S$ contains almost all points of $T$, then the sum $\sum_{S\in \calS} \ell(S)$ diverges.
\end{theorem}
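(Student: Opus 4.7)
The plan is to reduce directly to \Cref{thm:main_noweight} by encoding each axis-parallel square in $\calS$ as a bipartite graphon. The crucial geometric observation is that an axis-parallel square $S = [a, a+s] \times [b, b+s]$ is contained in $T$ if and only if $b \geq 0$, $a+s \leq 1$, and $b+s \leq a$; in particular the horizontal projection $A_S \defeq [a, a+s]$ and the vertical projection $B_S \defeq [b, b+s]$ are disjoint subintervals of $[0,1]$. So each $S \in \calS$ produces a symmetric measurable graphon
$$W_S \defeq \mathbf{1}_{(A_S \times B_S) \cup (B_S \times A_S)},$$
which is $(A_S, B_S)$-bipartite and satisfies $m(A_S \cup B_S) = 2\ell(S)$.

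Let $\calW \defeq \{W_S : S \in \calS\}$, which is countable since $\calS$ is. The main verification is that $\calW$ is a $(1-\ve)$-full separating system. For $(x,y)$ with $x > y$, we have $\calW^{\Sigma}(x,y) = 1$ iff $(x,y) \in \bigcup_{S \in \calS} (A_S \times B_S) = \bigcup_{S \in \calS} S$; by the symmetry of each $W_S$, for $(x,y)$ with $y > x$ we have $\calW^{\Sigma}(x,y) = 1$ iff $(y,x) \in \bigcup_{S \in \calS} S$. Discarding the diagonal (measure zero), this gives
$$\int_{[0,1]^2} \calW^{\Sigma}(x,y)\, dx\,dy \;=\; 2\, m\!\left(\bigcup_{S \in \calS} S\right) \;\geq\; 1-\ve.$$
Applying \Cref{thm:main_noweight} then yields $\sum_{S \in \calS} m(A_S \cup B_S) \geq \log\frac{1}{\ve}$, i.e.\ $\sum_{S \in \calS} 2\ell(S) \geq \log\frac{1}{\ve}$, which is the desired bound.

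The ``in particular'' clause is immediate: if $\bigcup_{S \in \calS} S$ has full measure in $T$, then $m(\bigcup_{S \in \calS} S) = \frac{1}{2} \geq \frac{1-\ve}{2}$ for every $\ve \in (0,1)$, so the displayed inequality forces $\sum_{S \in \calS} \ell(S) \geq \frac{1}{2}\log\frac{1}{\ve}$ for every such $\ve$, hence the sum diverges. I do not anticipate any serious obstacle; the entirety of the content is the translation from the geometric covering problem to the graphon covering framework, after which \Cref{thm:main_noweight} supplies the conclusion. The only points requiring any care are the geometric condition $b+s \leq a$ that forces $A_S$ and $B_S$ to be disjoint (so that each $W_S$ is genuinely bipartite with $m(A_S \cup B_S) = 2\ell(S)$ rather than just $\leq 2\ell(S)$), and the factor of $2$ in $\int \calW^{\Sigma}$ coming from reflecting $T$ across the diagonal.
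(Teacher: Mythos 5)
Your proof is correct and is essentially the same argument as the paper's: the paper writes each square's graphon as $G_S = \mathbf{1}_{S\cup S^*}$ where $S^*$ is the reflection of $S$ across the diagonal, which is exactly your $\mathbf{1}_{(A_S\times B_S)\cup(B_S\times A_S)}$ once one observes $S = A_S\times B_S$, and both proofs then invoke \Cref{thm:main_noweight}. You spell out the routine verifications (disjointness of $A_S$ and $B_S$ up to measure zero, the factor of $2$ from reflecting, and the $(1-\ve)$-fullness) that the paper merely asserts.
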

\begin{proof}
    For each $S\in \calS$, define $S^*$ be the symmetric transformation of $S$ with respect to the line $x - y = 0$. Define a graphon $G_S \defeq \mathbf{1}_{S\cup S^*}$ and consider $\calG \defeq \{G_S: S\in \calS\}$. Then  $\calG$ is a $(1-\ve)$-full separating system where each $S\in \calS$ is $(A_S, B_S)$-bipartite for some $A_S, B_S$ with $m(A_S \cup B_S) = 2\ell(S)$. By \Cref{thm:main_noweight}, we have
    $$\sum_{S\in \calS} 2\ell(S) = \sum_{S\in \calS} m(A_S\cup B_S) \geq \log\frac{1}{\ve}.$$ Thus $\sum_{S\in \mathcal{S}} \ell(S) \geq \frac{1}{2}\log\frac{1}{\ve}$, this finishes the proof.
\end{proof}

The following observation implies if we consider the sum of the $d$-th power of lengths of squares with $d>1$, then a desired collection of squares exists. 

\begin{observation}\label{obs:square-side-power}
     Let $d > 1$ and $T \defeq \{(x, y)\in \mathbb{R}^2 : 0\leq x\leq 1, 0\leq  y \leq x\}$. There exists a countable collection of squares $\calS$ whose sides are parallel to the axes and $S\subset T$ and $\sum_{S\in \calS} \ell(S)^d$ converges. 
\end{observation}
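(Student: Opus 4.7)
The plan is to build $\calS$ recursively in a manner analogous to the construction in \Cref{obs:triangle_square_finite}, but always using axis-parallel squares. Observe first that the maximal axis-parallel square contained in $T$ has side length $\tfrac{1}{2}$, namely $S_1 = [\tfrac{1}{2}, 1] \times [0, \tfrac{1}{2}]$. Removing $S_1$ from $T$ leaves the two smaller right triangles $T_1 = \{(x,y) : 0 \leq x \leq \tfrac{1}{2},\ 0 \leq y \leq x\}$ and $T_2 = \{(x,y) : \tfrac{1}{2} \leq x \leq 1,\ \tfrac{1}{2} \leq y \leq x\}$, each of which is a translated copy of $T$ scaled by $\tfrac{1}{2}$. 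I would then iterate the same procedure inside $T_1$ and $T_2$, producing at every level a collection of axis-parallel squares placed inside triangles similar to $T$.

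After $n$ iterations of this scheme I would have $2^n$ squares of side length $2^{-(n+1)}$. Consequently
$$\sum_{S \in \calS} \ell(S)^d = \sum_{n = 0}^{\infty} 2^n \cdot \bigl(2^{-(n+1)}\bigr)^d = 2^{-d} \sum_{n = 0}^{\infty} \bigl(2^{1-d}\bigr)^n,$$
which is a geometric series with ratio $2^{1-d} < 1$ whenever $d > 1$, hence convergent (to $\tfrac{1}{2^d - 2}$). A short area computation (the uncovered region after step $n$ consists of $2^{n+1}$ triangles each of scale $2^{-(n+1)}$, with total area $2^{-(n+2)}$) further shows that the resulting $\calS$ covers almost all of $T$, matching the framing of \Cref{thm:cover_triangle_square}.

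There is no real obstacle here—the construction is elementary self-similarity and the convergence of the sum reduces to a geometric series. The only points to check carefully are that the maximal axis-parallel square in $T$ really has side $\tfrac{1}{2}$ and that the two leftover regions are similar to $T$, both of which are immediate from the explicit coordinates of the vertices of $T$.
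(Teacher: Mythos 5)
Your construction is essentially identical to the paper's: pick the maximal axis-parallel square in $T$, leaving two half-scale copies of $T$, recurse, and sum the geometric series $\sum_n 2^n (2^{-(n+1)})^d$. In fact your coordinates are the more careful version — the paper writes $S_1 = [0,1/2]\times[0,1/2]$ (which is not contained in $T$) and a similarity ratio of $\tfrac{\sqrt 2}{3}:1$ (copied from the previous observation), whereas your $S_1 = [\tfrac12,1]\times[0,\tfrac12]$ and ratio $\tfrac12:1$ are the correct ones.
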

\begin{proof}
Take a square $S_1= [0,1/2]\times [0,1/2]$. Then $T-S_1$ is a disjoint union of two triangles similar to $T$ with the ratio $\frac{\sqrt{2}}{3} : 1$.
We repeat this process on each of the smaller triangles to obtain a collection $\mathcal{S}$ of squares covering almost all of $T$. Then for each $i\in \mathbb{N}$, we obtain $2^{i-1}$ triangles of side lengths $(\frac{1}{2})^i$. Moreover, as $d>1$, we have
$$\sum_{i\in \mathbb{N}}\sum_{j\in [2^{i-1}]} \ell(S_{i, j})^d = \sum_{i\in \mathbb{N}}\frac{2^{i-1}}{2^{di}} = \sum_{i\in \mathbb{N}} 2^{(1-d)i - 1} < \infty.$$ 
\end{proof}


Another potential application is regarding the following concept of perfect $k$-hashing. 
\begin{definition}
    Let $\calC$ be the collection of elements in $[k]^n$, where each element is called a $k$-ary code with length $n$. We say $\calC$ is perfect $k$-hashing with multiplicity $\lambda$ if for any size $k$ subset $S\subset \calC$, there exists $J\subseteq [n]$, $|J|\geq \lambda$ such that every $j\in J$, the $j$-th coordinates of $S$ is rainbow. If $\lambda = 1$, simply saying $C$ is perfect $k$-hashing.
\end{definition} 

 In 1984, Fredman and Koml\'{o}s~\cite{fredman1984size} obtained the upper bound of perfect $k$-hash codes using Hansel's lemma (\Cref{lem:Hansel_lemma}). They showed that if $\calC$ is an perfect $k$-hashing code, then $|\calC| \leq 2^{\left(\frac{k!}{k^{k-1}} + o(1)\right) n}$. This bound is called Fredman-Koml\'{o}s bound. Recently, Guruswami and Riazanov~\cite{guruswami2022beating} conjectured that Fredman-Koml\'{o}s bound can be improved in exponentially for any $k > 3$. They also provided explicit better bound for $k = 5, 6$ and remain other cases as a conjecture. Costa and Dalai~\cite{costa2021new} solved Guruswami and Riazanov's conjecture affirmatively, so there exist $R_k < \frac{k!}{k^{k-1}}$ for any $k > 3$ such that $|\calC|\leq 2^{R_k n}$ for any perfect $k$-hashing $\calC$.
 
 By using \Cref{cor:graph_lowerbound} instead of \Cref{lem:Hansel_lemma} in their proof, one can extend this result to conclude that $|\calC|\leq  n^{-\lceil \frac{\lambda-1}{2} \rceil + o(1)} 2^{R_k n}$ if $\calC$ is a $k$-hashing with multiplicity $\lambda$. However, as this improvement is almost negligible for small $\lambda$ compared with the original bound for $\lambda=1$, we do not claim any significance of this bound. However, this suggests a possibility that \Cref{cor:graph_lowerbound} might be useful in terms of studying $k$-hashing with multiplicity $\lambda >1$.

\section{Concluding remarks}
In this article, we investigated generalizations of Hansel's lemma for multigraphs and for graphons. Between our two results \Cref{cor:graph_lowerbound} and \Cref{thm:graph_upperbound}, there is a gap between the constants multiplied to the term $\log\log \frac{1}{\ve}$. Identifying the correct constant would be an interesting question. 

As Hansel's lemma considered weakly separating systems, there is similar inequality in strongly separating systems, which can be phrased as a digraph covering problem. Bollob\'{a}s and Scott~\cite{bollobas2007separating} show there exists similar bound exists on strongly separating systems by using Bollob\'{a}s set pairs inequality~\cite{bollobas1965generalized}. In their follow-up study~\cite{bollobas2007separating2}, they applied their inequality about strongly separating systems to show every $n$-vertex digraph with the diameter at most $2$ have arcs at least $n\log n - \frac{3}{2}\log \log n - O(n)$. It will be interesting to find generalizations of these results in terms of multi-digraph or graphons.

\section*{Acknowledgement}
The authors would like to thank Seonghyuk Im for his helpful discussions.

\printbibliography

\end{document}